\newcommand{\shrink}{\kern -0.75\baselineskip}
\newtheorem{theorem}{Theorem}
\newtheorem{lemma}[theorem]{Lemma}
\newtheorem{corollary}[theorem]{Corollary}
\newtheorem{proposition}[theorem]{Proposition}
\newcommand{\proof}{{\it Proof.} \space}
\newcommand{\qed}{\ifhmode\unskip\nobreak\fi\ifmmode\ifinner
\else\hskip5 pt\fi\fi \hbox{\hskip5 pt
\vrule width4 pt  height6 pt  depth1.5 pt \hskip 1pt }}
\begin{document}
\title{Toughness and Hamiltonicity in \\ Random Apollonian Networks}

\author{Lilian Markenzon 
\footnote{Partially supported by grant 304706/2017-5, CNPq, Brazil.}
\\ 
       NCE -  Universidade Federal do Rio de Janeiro\\
                     markenzon@nce.ufrj.br
\and
            Christina F. E. M. Waga\\
            IME - Universidade do Estado do Rio de Janeiro   \\
            waga@ime.uerj.br           
}

\date{}

\maketitle

\thispagestyle{empty}


\begin{abstract}
In this paper we study the toughness of Random Apollonian Networks ({\em RAN}s),
a random graph model which generates planar graphs with power-law properties.
We  consider their important characteristics: 
 every  \emph{RAN} is a uniquely representable chordal graph and  
 a planar $3$-tree and as so, known results about  these classes can be particularized.
We establish a partition of the class in eight nontrivial subclasses 
and for each one of these subclasses we provide bounds for the toughness of their elements.
We also  study the hamiltonicity  of the elements of these subclasses.

\bigskip
\noindent
{\em Keywords}:
randon Apollonian network, planar $k$-tree, clique-tree,  toughness, hamiltonicity

 \end{abstract}


\section{Introduction}
\label{sec:introduction}

Over the last few years, the ever growing interest in social networks, the Web graph, biological
networks, etc., led  to  a great deal of research being built
around modelling real world networks.
In 2005, Andrade \emph{et al.} \cite{An05} introduced Apollonian networks (\emph{AN}s), inspired
by Apollonian packings \cite{KS43}, that  proved to
be an interesting tool for modeling real networked
systems. 
These networks can be produced as follows: start with a triangle
and then at each iteration, inside each triangle,
a vertex  is added and linked to the three vertices. 
Apollonian networks are scale-free, display the small-world effect 
and have a power-law degree distribution.
Generalizing {\em AN}s, the Random Apollonian Networks  (\emph{RAN}s)  were introduced by Zhou {\em et al.} \cite{ZYW05}; 
in this case, at each iteration of  a   {\em RAN}  a triangle
is randomly selected.
Some problems have been solved for these classes.
For instance, an exact analytical expression for the number of spanning trees in
{\em AN}s was achieved by Zhang {\em et al.} \cite{ZWC14};
the degree distribution, $k$ largest degrees and $k$ largest eigenvalues (for a fixed $k$) 
and the diameter of {\em RAN}s were studied in Frieze and Tsourakakis \cite{FT14};
Ebrahimzadeh {\em et al.} \cite{EFG13} follow  this line of research by studying the asymptotic properties of 
the longest  paths and presenting sharp estimates for the diameter of a {\em RAN}.
Others papers had employed a non-deterministic concept.

In this paper we focus in a different approach.
Considering  the equivalence between \emph{RAN}s and the planar $3$-trees ( i.e.,  
the maximal chordal planar graphs \cite{MJP06})
we analyse vulnerability properties of the networks, based on their clique-trees.

The toughness of a graph is an important invariant  
introduced in 1973 by  Chv\' atal \cite{Ch73} that deals with the vulnerability of a graph. 
Let the number of components of a graph $G=(V,E)$ be denoted  by $\omega(G)$. 
A graph $G$ is $t$-tough if
 $|S| \geq  t \,\omega(G-S)$ for every subset $S\subseteq V$  with  $\omega(G-S) > 1$. 
The \textit{toughness} of $G$, denoted $\tau(G)$, is the maximum value of $t$ for which $G$ is $t$-tough (taking $\tau(K_n) = \infty$,  $n \geq  1$). 
In other words, the toughness  relates the size of a separator with the number of components obtained after deleting it.  
It is important to highlight that the toughness can be directly related to the hamiltonicity of the graph.
Chv\' atal \cite{Ch73} has  established that every Hamiltonian graph is 1-tough, but   $1$-toughness does not ensure hamiltonicity. 
He  has also conjectured that there exists a $t$ such that every
 $t$-tough graph is Hamiltonian.
Some papers prove Chv\' atal's conjecture for different graph classes: 
$\tau(G)\geq 3/2$  for a split graph  \cite{KLM96}, 
$\tau(G) > 1$  for planar chordal graphs \cite{BHT99}, 
$ \tau(G) \geq 3/2$ for spider graphs \cite{KKS07} and
 $\tau(G)\geq 1$ for strictly chordal graphs \cite{MW16}. 
In particular for $k$-trees, 
Broersma \emph{et al.} \cite{BXY07} presented important results,
showing that if $G$ is a $k$-tree, $k \geq 2$, with toughness at least $(k+1)/3$, 
then $G$ is Hamiltonian.
For $k=2$, they prove that every $1$-tough $2$-tree on at least three vertices is Hamiltonian.
Kabela \cite{Ka19} has improved this result, showing that every $k$-tree (except for $K_2$) 
with toughness greather than $k/3$ is Hamiltonian.

In this paper we study the toughness of Random Apollonian Networks
based  on their characteristics: 
 every  \emph{RAN} is a uniquely representable chordal graph and, as so, it has a unique clique-tree;
every {\em RAN} is a planar $3$-tree and the results of B\"ohme  {\em et al.} \cite{BHT99} and
 Broersma \emph{et al.} \cite{BXY07} can be particularized.
We establish a partition of the class in eight nontrivial subclasses reliant on the structure of the clique-tree, 
and for each one of these subclasses we provide bounds for the toughness of their elements.
We also  study the hamiltonicity  of the elements of these subclasses.
Some well-known graphs, as the Goldner-Harary graph and the Nishizeki's example
of a non-Hamiltonian maximal planar graph \cite{Ni80}, fall in one of the defined subclasses.


\section{Background}
\label{sec:background}

Let $G  =(V,E)$,  be a connected graph, 
where $|V|=n$  and  $|E| = m$. 
The {\em set of neighbors\/} of a vertex $v \in V$ is denoted by
$N(v) = \{ w \in V; \{v,w\} \in E\}$. 
The \emph{degree} of a vertex $v\in V$ is $d(v)=|N(v) |$. 
For any $S \subseteq V$, 
the subgraph of $G$ induced by $S$ is denoted $G[S]$. 
 If $G[S]$ is a complete graph then $S$ is a \emph{clique} in $G$. 
A vertex $v\in V$ is said to be {\em
simplicial\/} in $G$ when $N(v)$ is a clique in $G$.
The set of simplicial vertices of $G$  is denoted by $SI$.

The graphs  $G=(V,E)$ and $G'=(V',E')$ are isomorphic if there is a bijective function $f:V \to V'$ 
such that for all $v,w  \in V,$ $\{v,w \} \in E$ if and only if 
$\{f(v),f(w) \} \in E'$, i.e, $f$ preserves adjacency.

Basic concepts about chordal graphs are assumed to be known and 
can be found in  Blair and Peyton \cite{BP93}  and Golumbic \cite{Go04}.
In this section, the most pertinent concepts are reviewed.

A subset $S \subset V$ is
a {\em separator} of $G$ if at least two vertices in the same connected
component of $G$ are in two distinct connected components of
$G[V\setminus S]$. 

Let $G = (V, E)$ be a chordal graph and $u,v  \in V$. 
A subset $S \subset V$  is a {\em vertex separator}  for
non-adjacent vertices $u$  and $v$  (a $uv$-separator) if the
removal of $S$ from the graph separates $u$ and $v$  into distinct
connected components. 
If no proper subset of $S$  is a $uv$-separator then $S$ is a {\em minimal $uv$-separator}. 
When the pair of vertices remains unspecified, we refer to $S$  as a {\em
minimal vertex separator} ({\em mvs}). 
The set of minimal vertex separators is denoted by $\mathbb S$.

The {\it clique-intersection graph\/} of a graph $G$ is the
connected weighted graph whose vertices are the maximal cliques of $G$ and whose
edges connect vertices corresponding to non-disjoint maximal cliques.
Each edge is assigned an integer weight, given by the cardinality of the
intersection between the maximal cliques represented by its endpoints.
Every maximum-weight spanning tree of the clique-intersection graph of $G$
is called a {\it clique-tree\/} of $G$.
The set of maximal cliques of $G$ is denoted by $\mathbb{Q}$.
A clique-tree of $G$ represents the graph $G$.
Clique-trees satisfy the  \emph{induced subtree property} (\emph{ISP}):
  ${\mathbb Q}(v)$ induces a subtree of  the clique-tree $T$ of $G$ where  
  ${\mathbb Q}(v)$ is the set of maximal cliques containing the vertex $v\in V$.  
 Observe that  each maximal clique $Q\in \mathbb{Q}$ is related to a vertex $q$ of the clique-tree $T$ of $G$.
A {\em simplicial clique} is a maximal clique containing at least one simplicial vertex.

For a chordal graph $G$ 
and a clique-tree $T$ of $G$,
a set $S\subset V$ is a \emph{mvs}  of $G$ if
and only if $S= Q\cap Q' $ for some edge $\{Q, Q'\}$ in $T$. 
Moreover, the multiset  ${\mathbb M}$ of
the minimal vertex separators of $G$ is the same for every
clique-tree of $G$.
The {\em multiplicity} of the minimal vertex separator $S$, denoted by $ \mu (S)$,   is the number of times that $S$ appears in  ${\mathbb M}$. 
The determination of the minimal vertex separators and their multiplicities 
can be performed in linear time \cite{MP10}.   

A \emph{$k$-regular tree} is a tree in which every vertex that  is not a leaf has degree $k$.


\section{Some subclasses of chordal graphs}

In this paper we deal with some subclasses of chordal graphs which are now reviewed.

A chordal graph is called a {\em uniquely representable chordal graph} \cite{KM02}
(briefly\emph{ ur-chordal graph})  if it has exactly one clique-tree.

\begin{theorem}{\bf \cite{KM02}}\label{theo:caract-urchor} 
Let  $G$ be a chordal graph.  Then, $G$ is uniquely representable if and only if 
 there is no
proper containment between any minimal vertex separators and 
all  minimal vertex separators are   of  multiplicity
one. 
\end{theorem}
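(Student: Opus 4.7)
\emph{Proof plan.} The strategy is to view clique-trees as the maximum-weight spanning trees of the clique-intersection graph and to exploit the induced subtree property (ISP) stated in Section~2. Uniqueness of the clique-tree then becomes uniqueness of a maximum-weight spanning tree, which I will establish (and refute) by standard tree-exchange arguments.

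For the sufficiency direction I would proceed by contradiction. Suppose both hypotheses hold and let $T_1,T_2$ be two distinct clique-trees. Pick any edge $e=\{Q,Q'\}$ in $T_1\setminus T_2$ and set $S=Q\cap Q'\in \mathbb{S}$. Consider the unique $Q$--$Q'$ path in $T_2$. For every vertex $v\in S$, the ISP applied to $T_2$ forces every clique on this path to contain $v$, so every clique on the path contains all of $S$, and every edge-separator along the path is an mvs that contains $S$. The no-containment hypothesis then makes each such separator \emph{equal} to $S$, and since the multiset $\mathbb{M}$ is clique-tree-invariant and $\mu(S)=1$, the path can have at most one edge, hence exactly one (as $Q\neq Q'$). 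But that edge must be $e$ itself, contradicting $e\notin T_2$.

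For the necessity direction I would argue the contrapositive by producing a second maximum-weight spanning tree of the clique-intersection graph, i.e., a second clique-tree. In the multiplicity case, take two distinct edges $e_1=\{Q_1,Q_1'\}$ and $e_2=\{Q_2,Q_2'\}$ of a clique-tree $T$ with common separator $S$; deleting $e_1$ splits $T$ into two components and the endpoints of $e_2$ lie on the same side. Among the cross-edges between the endpoints of $e_1$ and the endpoints of $e_2$, I would pick one that is different from $e_1$ and does not already belong to $T$; this edge has weight at least $|S|$ in the clique-intersection graph because both of its endpoint-cliques contain $S$, so swapping it in place of $e_1$ yields another spanning tree of weight at least $w(T)$, hence another clique-tree. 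In the containment case $S_1\subsetneq S_2$, the same exchange works: taking edges $e_1,e_2$ of $T$ realizing $S_1$ and $S_2$ respectively, both endpoints of $e_2$ contain $S_2\supsetneq S_1$, so once again the chosen cross-edge has weight at least $|S_1|$ and the swap produces a distinct clique-tree.

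The main obstacle I foresee is the case analysis in the necessity direction when two of the four endpoint cliques coincide (so $e_1$ and $e_2$ share a vertex in the clique-tree); here the naive cross-edge may collapse to $e_1$ or $e_2$. In each such degeneracy I would verify that the remaining cross-edge is not already an edge of $T$ by observing that otherwise $T$ would contain a triangle, contradicting that it is a tree. A small bookkeeping check also needs to confirm that the new tree differs from $T$, which is immediate once the incoming cross-edge is shown to be outside $T$.
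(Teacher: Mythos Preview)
The paper does not actually prove Theorem~\ref{theo:caract-urchor}: it is quoted verbatim from Kumar and Madhavan~\cite{KM02} and used as a black box, with no proof given. There is therefore nothing in the paper to compare your argument against.

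On its own merits, your plan is sound. The sufficiency direction is clean: ISP forces every edge on the $Q$--$Q'$ path in $T_2$ to realise an mvs containing $S$; the no-containment hypothesis collapses each of them to $S$; and invariance of the multiset $\mathbb{M}$ together with $\mu(S)=1$ forces the path to have length one, yielding the contradiction $e\in T_2$. For necessity, your edge-exchange argument is the standard one and is correct, including the degeneracy analysis when $e_1$ and $e_2$ share a clique (the ``triangle in a tree'' observation disposes of the risk that the chosen cross-edge already lies in $T$). Two small points worth making explicit when you write it up: (i) the cross-edge you insert is a genuine edge of the clique-intersection graph because both endpoint cliques contain $S$ (resp.\ $S_1$), which is non-empty since $G$ is connected; and (ii) the resulting spanning tree has weight at least $w(T)$ and hence is itself a maximum-weight spanning tree, i.e.\ a clique-tree, which is what you need.
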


A {\em $k$-tree}, $k > 0$, firstly presented in \cite{Ro74}, can be inductively defined as follows:
\begin{enumerate}
  \item Every complete graph with $k+1$ vertices is a $k$-tree.
  \item  If $G = (V, E)$ is a $k$-tree, $v \notin V$ and $S \subseteq V$
      is a $k$-clique of $G$, then $G' = (V \cup \{v\}, E \cup \{ \{v,w\} \mid w \in S \})$
      is also a $k$-tree.
  \item Nothing else is a $k$-tree.
\end{enumerate}

Two subclasses  of $k$-trees are  the  simple-clique $k$-trees (SC $k$-trees)  and 
the $k$-path graphs  \cite{MJP06}.
A {\em SC $k$-tree}, $k>0$,  is a uniquely representable $k$-tree.
A complete graph on $k+1$ vertices is a {\em $k$-path graph}, $k > 0$; 
if $n > k+1$,  $G$ is a {\em $k$-path graph} if
and only if $G$ has exactly two simplicial vertices.


\subsection{Apollonian networks}

Several results can be deduced from the fact that 
Random Apollonian Networks  are the same as  SC 3-trees, 
proved to be the maximal chordal planar graphs
by  Markenzon {\em et al.} \cite{MJP06}. 

Consider  $G=(V,E)$   a \emph{RAN} on $n$ vertices. 
Since it is a $3$-tree, it is immediate that 
 every maximal clique has cardinality 4 and 
every minimal vertex separator  has cardinality 3. 
Graph $G$ has $n-3$ maximal cliques and, since it is uniquely representable,
every set of three distinct vertices appears at most in two maximal cliques;
for $n \geq 5$, the number of simplicial vertices is less or equal the number of non-simplicial ones.

\begin{proposition} Let $G = (V,E)$ be a non-complete RAN and $T = (V_T, E_T )$ be its clique-tree. 
\begin{enumerate}
  \item $|V_T|= |\mathbb{Q}|=n-3$. 
  \item $|E_T|= |\mathbb{S}|=n-4$.
  \item The number of leaves in  $T$ is the number of simplicial vertices in $G$. 
  \item Internal vertices of $T$ contain exclusively vertices which belong to minimal vertex separators.
   \item  Every vertex of $T$ has  degree less or equal 4. 

\end{enumerate}
\end{proposition}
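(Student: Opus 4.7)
The plan is to dispatch each item in order, exploiting that a RAN is an SC $3$-tree and therefore a uniquely representable chordal graph whose maximal cliques have size $4$ and whose minimal vertex separators have size $3$.

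For item (1), I would induct on $n$ using the recursive definition of a $3$-tree. The base case $K_4$ has one maximal clique and $n-3 = 1$. In the inductive step, adding a vertex $v$ adjacent to a $3$-clique $S$ creates the new maximal clique $S \cup \{v\}$ and does not destroy any existing maximal clique (no edges are removed and $S$ was already contained in a $4$-clique); so the count grows by exactly one, giving $|\mathbb{Q}| = n-3$, and hence $|V_T| = n-3$. Item (2) is then immediate: $T$ is a tree on $n-3$ vertices, so $|E_T|=n-4$; each edge of $T$ is labelled by an \emph{mvs} of $G$, and by Theorem~\ref{theo:caract-urchor} (unique representability) every \emph{mvs} has multiplicity one, so this labelling is a bijection and $|\mathbb{S}|=n-4$.

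For item (3), I would set up a bijection between simplicial vertices of $G$ and leaves of $T$. If $v$ is simplicial then by the ISP $\mathbb{Q}(v)=\{Q\}$ for a unique maximal clique $Q$. To see $Q$ is a leaf, note that any edge of $T$ incident to $Q$ is labelled by a $3$-element subset of $Q$ not containing $v$; there is only one such subset, namely $Q\setminus\{v\}$, and since mvs have multiplicity one, at most one edge of $T$ is incident to $Q$. Conversely, if $Q$ is a leaf with unique incident mvs $S=Q\setminus\{v\}$, then $v$ lies in no other maximal clique (any such clique would share an edge with $Q$ in $T$, forcing that edge to carry a $3$-subset of $Q$ containing $v$, i.e.\ $S$, contradiction), so $v$ is simplicial. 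The maps are mutually inverse; the only subtle point, which I would verify by a short argument, is that two distinct simplicial vertices cannot lie in the same leaf of a \emph{non-complete} RAN (otherwise $Q$ would admit no incident edge in $T$ and $G$ would be complete), so the bijection is well-defined.

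Item (4) follows from (3): if $Q$ is internal in $T$, then by the bijection $Q$ contains no simplicial vertex, hence every $v \in Q$ belongs to at least two maximal cliques. Since $\mathbb{Q}(v)$ induces a subtree (ISP), $v$ lies in two adjacent nodes of $T$, and the label of that edge is an mvs containing $v$. For item (5), each edge of $T$ incident to the node $q$ corresponding to $Q$ is labelled by a distinct $3$-subset of $Q$ (distinctness again uses multiplicity one), and $Q$ has only $\binom{4}{3}=4$ such subsets; hence $\deg_T(q)\le 4$.

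The only genuine obstacle is item (3); everything else is bookkeeping on the clique-tree. In particular, I expect to spend most of the write-up making sure the simplicial-to-leaf correspondence is injective under the hypothesis that $G$ is non-complete, and that the argument distinguishing "leaf" from "internal" uses only multiplicity one of mvs rather than any additional planar structure.
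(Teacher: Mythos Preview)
The paper states this proposition without proof, regarding all five items as immediate from the SC $3$-tree and uniquely-representable structure reviewed in the paragraph just before it. Your plan is correct and supplies exactly the details the paper suppresses; the induction for (1), the multiplicity-one edge labelling for (2) and (5), and the simplicial-vertex/leaf bijection driving (3) and (4) are the natural unpackings of what the authors leave implicit. One cosmetic remark: in item~(3), the fact that a simplicial vertex $v$ lies in a unique maximal clique is not the ISP itself but the elementary observation that $N(v)\cup\{v\}$ is already that clique; the ISP is what you genuinely need in the converse direction, to force the neighbour of a leaf $Q$ to contain $v$ if $v$ were in any other maximal clique.
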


%
\section{Toughness} 

Chv\'atal \cite{Ch73} had  introduced toughness in 1973. 
Let $\omega(G)$ denote the number of components of a graph $G=(V,E)$. 
A graph $G$ is $t$-tough if $|S| \geq  t \,\omega(G -S)$ for every subset $S\subseteq V$  with  $\omega(G - S) > 1$. 
The \textit{toughness} of $G$, denoted $\tau(G)$, is the maximum value of $t$ 
for which $G$ is $t$-tough (taking $\tau(K_n) = \infty$ for all $n \geq  1$). 
Hence if $G$ is not complete, $\tau(G)= min\big\{ \frac{|S|}{\omega(G-S)}\big\}$, 
where the minimum is taken over all separators  $S$ of vertices in $G$ \cite{B06}.

We present below the most important known results directly related to our paper.

\begin{theorem}\label{theo:Chavtal0} {\rm\cite{Ch73} }
If $H$ is a spanning subgraph of $G$  then $\tau(H)\leq \tau(G)$. 
\end{theorem}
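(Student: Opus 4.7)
The plan is to argue directly from the definition of toughness, using the fact that deleting edges can only increase the number of connected components.

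First I would dispose of the complete-graph edge cases. If $G$ is complete then $\tau(G) = \infty$ and the inequality is trivially true. If $H$ is complete, then because $H$ is a spanning subgraph of $G$ we have $V(H) = V(G)$ and $E(H) \subseteq E(G)$, forcing $G = H = K_n$; so again $\tau(H) = \tau(G) = \infty$. Hence I can assume both $G$ and $H$ are non-complete, and use the characterization $\tau(G) = \min \{|S|/\omega(G - S)\}$ taken over all separators $S$ of $G$.

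Next, the key observation: for every $S \subseteq V$, $H - S$ is a spanning subgraph of $G - S$. Since deleting edges can only split components apart (never merge them), this gives
\begin{equation*}
\omega(H - S) \;\geq\; \omega(G - S) \quad \text{for every } S \subseteq V.
\end{equation*}
In particular, every separator of $G$ is also a separator of $H$, so the set of separators of $H$ contains the set of separators of $G$.

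Combining these two facts, I would write
\begin{equation*}
\tau(H) \;=\; \min_{S \text{ separator of } H} \frac{|S|}{\omega(H - S)}
\;\leq\; \min_{S \text{ separator of } G} \frac{|S|}{\omega(H - S)}
\;\leq\; \min_{S \text{ separator of } G} \frac{|S|}{\omega(G - S)}
\;=\; \tau(G),
\end{equation*}
where the first inequality uses that we are minimizing over a smaller family, and the second uses $\omega(H - S) \geq \omega(G - S)$.

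There is no real obstacle here; the statement is essentially a direct consequence of monotonicity of the component count under edge deletion, together with the min-formulation of $\tau$. The only subtlety worth flagging is the bookkeeping with the complete-graph cases, where $\tau$ is defined by convention rather than by the minimum formula.
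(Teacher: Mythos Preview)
Your proof is correct and is the standard argument for this classical fact. The paper itself does not supply a proof: the theorem is simply quoted from Chv\'atal's 1973 paper as a known result, so there is no argument in the paper to compare yours against.
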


\begin{theorem}\label{theo:Chavtal} {\rm\cite{Ch73} }
If $G$ is Hamiltonian then $\tau(G)\geq 1$. 
\end{theorem}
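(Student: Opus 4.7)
The plan is to use the Hamiltonian cycle directly as a structural witness. Fix any separator $S \subseteq V$ with $\omega(G-S) > 1$; the goal is to show $|S| \geq \omega(G-S)$, since then $\tau(G) \geq 1$ follows from the definition.

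First I would introduce a Hamiltonian cycle $C$ of $G$, viewed as a spanning subgraph. The key combinatorial observation is that removing a set of $s = |S|$ vertices from a cycle on $n$ vertices leaves a disjoint union of at most $s$ paths (``arcs''), since each deleted vertex can create at most one new break point along the cycle (and fewer if some deleted vertices are consecutive on $C$). Consequently $\omega(C - S) \leq |S|$.

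Next I would lift this bound from $C - S$ to $G - S$. Since $C$ is a spanning subgraph of $G$, the graph $C - S$ is a spanning subgraph of $G - S$ on the same vertex set $V \setminus S$. Every component of $G - S$ is therefore a union of components of $C - S$, which gives
\[
\omega(G - S) \;\leq\; \omega(C - S) \;\leq\; |S|.
\]
Rearranging yields $|S| / \omega(G-S) \geq 1$, and taking the minimum over all separators $S$ with $\omega(G-S) > 1$ shows $\tau(G) \geq 1$.

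The only subtlety is checking carefully that $\omega(C - S) \leq |S|$ in the degenerate cases (for instance when $S = V$, but then $\omega(G-S) = 0$ and the hypothesis rules this out; and when consecutive cycle vertices belong to $S$, which only decreases the number of arcs). None of this requires a difficult argument; the ``hard part,'' if any, is just making precise the passage from the cyclic arc decomposition to components of $G-S$, which rests on the spanning-subgraph relationship between $C$ and $G$.
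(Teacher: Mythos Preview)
The paper does not give its own proof of this statement; it is quoted as a background result from Chv\'atal~\cite{Ch73} without argument. Your proof is correct and is exactly the classical one: removing $|S|$ vertices from the Hamiltonian cycle leaves at most $|S|$ arcs, and since the cycle is a spanning subgraph this bounds $\omega(G-S)$ from above. (Equivalently, you are verifying that $\tau(C_n)=1$ and then implicitly applying Theorem~\ref{theo:Chavtal0}.)
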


\begin{theorem}\label{theo:Bohme} {\rm\cite{BHT99} }
Let $G$ be a planar chordal graph with $\tau(G) >1$. Then $G$ is Hamiltonian. 
\end{theorem}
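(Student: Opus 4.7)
The plan is to combine the three hypotheses to pin down the structure of $G$ very precisely, and then to build a Hamiltonian cycle by induction on the clique-tree.

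\medskip

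\emph{Phase 1 --- structural consequences.} Chordality forces every minimal vertex separator $S$ of $G$ to be a clique; planarity forbids $K_5$ and so bounds $|S|\le 3$; the toughness hypothesis $\tau(G)>1$ gives $|S|>\omega(G-S)\ge 2$, hence $|S|\ge 3$. Combining, every minimal vertex separator of $G$ is a triangle and $\omega(G-S)=2$ for every such $S$; in particular $G$ is $3$-connected, and since a maximal clique must strictly contain each incident separator, every maximal clique has exactly four vertices. Hence $G$ is actually a planar $3$-tree.

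\emph{Phase 2 --- induction on the clique-tree.} Proceed by induction on $|\mathbb Q|$. The base case $|\mathbb Q|=1$ is $K_4$ (or, degenerately, a triangle), which is Hamiltonian. For the inductive step, choose a leaf $Q$ of a clique-tree $T$ of $G$ with neighbor $Q'$ and separator $S=Q\cap Q'$; by Phase 1, $|S|=3$ and the single vertex $v\in Q\setminus S$ is simplicial with $N(v)=S$. Set $G'=G-v$. Apply the inductive hypothesis to $G'$ to obtain a Hamiltonian cycle $C'$, and then insert $v$ into $C'$ by detouring through $v$ between two of its three neighbors in the triangle $S$.

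\emph{Main obstacle.} Two difficulties threaten this induction. The first is that deleting the simplicial vertex $v$ may destroy the toughness hypothesis for $G'$ --- for example when $S$ ceases to be a minimal separator of $G'$, or when another leaf clique ``collapses'' onto $Q'$ and creates a separator of multiplicity greater than one --- so the few collapse cases need to be discharged by hand. The more substantive issue is that the Hamiltonian cycle $C'$ supplied by induction need not use any edge of the triangle $S=\{a,b,c\}$: the three arcs of $C'$ joining consecutive visits to $S$ could all have length at least two, in which case the naive local insertion of $v$ is unavailable. I would resolve this by strengthening the inductive statement to something like \emph{``$G$ admits a Hamiltonian cycle using a prescribed edge of each leaf separator of its clique-tree,''} which propagates through the recursion since the triangle $S$ offers three interchangeable edges and one can always rotate $C'$ around $S$ in the subgraph attached at $Q'$. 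Packaging this strengthened hypothesis and verifying the rotation step is always feasible is the technical heart of the argument; once in place, the gluing of $v$ into $C'$ is a one-line local modification.
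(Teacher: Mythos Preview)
This theorem is not proved in the paper; it is quoted from B\"ohme, Harant, and Tk\'a\v{c} \cite{BHT99} and used as a black box, so there is no ``paper's own proof'' to compare against.

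Your Phase~1 is correct and is exactly the right reduction: chordality, planarity, and $\tau(G)>1$ together force every minimal separator to be a triangle with $\omega(G-S)=2$, so every separator has multiplicity one, every maximal clique is a $K_4$, and $G$ is a simple-clique $3$-tree---a \emph{RAN} in this paper's language. Your first obstacle in Phase~2 is illusory: by Lemma~\ref{lema:simp}, removing a simplicial vertex from a $k$-tree cannot decrease toughness, and the clique-tree of $G-v$ is just $T$ with the leaf $Q$ deleted, so $G'=G-v$ is again a \emph{RAN} with $\tau(G')\ge\tau(G)>1$. No ``collapse'' can occur.

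The second obstacle, however, is real and your proposal does not close it. After deleting $v$, the triangle $S=N(v)$ is no longer a separator of $G'$ at all---it is merely a facial triangle---so an inductive hypothesis phrased in terms of ``leaf separators of the clique-tree of $G'$'' says nothing about $S$. Concretely: build $G'$ on $\{a,b,c,d,e,f\}$ as $K_4$ on $\{a,b,c,d\}$, then $e$ adjacent to $\{a,b,c\}$, then $f$ adjacent to $\{a,b,e\}$. The triangle $\{a,c,e\}$ is a face of $G'$; attaching $v$ there gives a \emph{RAN} $G$ with $\tau(G)=4/3>1$; yet the Hamiltonian cycle $a\,d\,c\,b\,e\,f\,a$ of $G'$ avoids all three edges $ac,ae,ce$, so $v$ cannot be spliced into \emph{that} cycle. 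Your ``rotation around $S$ in the subgraph attached at $Q'$'' is undefined, since in $G'$ nothing lies on the far side of $S$. A workable strengthening would have to guarantee a Hamiltonian cycle through a prescribed edge of a \emph{designated face} of $G'$ and show that this choice propagates through the recursion; as written, the technical heart you allude to is missing, not merely unpackaged.
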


\begin{theorem}\label{theo:ktreehamileq} {\rm\cite{BXY07} }
Let $G\neq K_2$ be a $k$-tree. Then $G$ is Hamiltonian if and only if $G$ contains a $1$-tough spanning $2$-tree.\end{theorem}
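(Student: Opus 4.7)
The plan is to prove the two implications separately: one follows almost immediately from the cited $k=2$ result of Broersma \emph{et al.} (``every $1$-tough $2$-tree on at least three vertices is Hamiltonian''), and the other from the observation that a Hamilton cycle of a chordal graph can be triangulated internally using chords of the graph itself.

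For the direction $(\Leftarrow)$: if $G$ contains a $1$-tough spanning $2$-tree $H$, then $|V(H)|=|V(G)|\ge 3$ (since $G$ is a $k$-tree distinct from $K_2$), so the quoted result provides a Hamilton cycle of $H$. As $H$ is spanning and a subgraph of $G$, this is also a Hamilton cycle of $G$.

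For the direction $(\Rightarrow)$: fix a Hamilton cycle $C=v_1v_2\cdots v_n v_1$ of $G$. I plan to build a spanning $2$-tree $H\subseteq G$ that still contains $C$; such an $H$ is automatically Hamiltonian and hence $1$-tough by Theorem~\ref{theo:Chavtal}. The construction triangulates the polygon $C$ with $n-3$ chords of $G$, selected recursively so as to be pairwise non-crossing in the cyclic order of $C$. Since $G$ is a $k$-tree it is chordal, and since induced subgraphs of chordal graphs are chordal, whenever the current subpolygon has length $\ge 4$ its boundary cycle has some chord $\{v_i,v_j\}\in E(G)$; this chord splits the subpolygon into two smaller ones whose vertex sets induce chordal subgraphs of $G$, and the recursion applies to each. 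The resulting graph $H$ consists of the $n$ edges of $C$ together with $n-3$ non-crossing diagonals, i.e.\ a triangulated polygon. Such a graph admits an ear-decomposition in which every removed vertex has exactly two already-adjacent neighbors, which is precisely the defining construction of a $2$-tree. Thus $H$ is a spanning $2$-tree of $G$ containing $C$, as required.

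The only genuinely nontrivial step is the triangulation: one must verify that the non-crossing chords can always be chosen from within $G$, and this is guaranteed by hereditary chordality together with the standard fact that every cycle of length at least $4$ in a chordal graph has a chord. The identification of triangulated polygons with $2$-trees (they are exactly the maximal outerplanar graphs) is folklore, and no delicate toughness analysis is needed for $(\Rightarrow)$ since Hamiltonicity of $H$ alone yields $\tau(H)\ge 1$.
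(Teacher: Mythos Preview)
The paper does not supply its own proof of this theorem: it is quoted verbatim from Broersma, Xiong and Yoshimoto \cite{BXY07} as background, with no accompanying argument. So there is nothing in the paper to compare your proposal against.

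That said, your proof is correct. The backward direction is exactly the intended reduction to the $k=2$ case mentioned in the introduction. For the forward direction, your recursive triangulation of the Hamilton cycle is the standard argument, and the one point that needs care---that at every stage the current subpolygon boundary is a genuine cycle in $G$ of length $\ge 4$, so chordality guarantees a chord lying \emph{inside} that subpolygon---is handled correctly because each subpolygon boundary consists of a contiguous arc of $C$ together with one previously chosen chord, all of which are edges of $G$. The resulting triangulated polygon is a maximal outerplanar graph, hence a $2$-tree, and since it contains $C$ it is Hamiltonian and therefore $1$-tough by Theorem~\ref{theo:Chavtal}. This is essentially how the result is proved in \cite{BXY07} as well.
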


\begin{theorem}\label{theo:ktreehamilsuf} {\rm\cite{BXY07}  }
If $G\neq K_2$ is a $\frac{k+1}{3}$-tough $k$-tree, $k\geq 2$, then  $G$ is Hamiltonian. 
\end{theorem}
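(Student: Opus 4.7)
I would try to reduce the statement to the previously quoted case $k=2$ by way of Theorem \ref{theo:ktreehamileq}. It suffices to exhibit a spanning $2$-tree $H\subseteq G$ that is itself $1$-tough, because the $k=2$ instance (every $1$-tough $2$-tree on at least three vertices is Hamiltonian, as recalled in the introduction) then produces a Hamilton cycle of $H$, and this cycle uses only edges of $G$.

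Stage one is the construction of $H$. Using the inductive definition of a $k$-tree, fix a generating sequence in which each new vertex $v_i$ is simplicial and attached to a $k$-clique $K_i$ of the current subgraph. Install an arbitrary $2$-tree on the initial $(k+1)$-clique. For each subsequent $v_i$, the $k$-clique $K_i$ sits inside the maximal $(k+1)$-clique shared with the previous step, so the $2$-tree built so far contains at least one edge $e_i\subseteq K_i$; attach $v_i$ to both endpoints of $e_i$. Iterating yields a spanning $2$-tree $H$ whose maximal (triangular) cliques are each contained in a maximal $(k+1)$-clique of $G$. Care in choosing the $e_i$ makes $H$ respect the clique-tree decomposition of $G$.

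Stage two transfers toughness from $G$ to $H$. Assume $\tau(H)<1$ and fix a minimum offending separator $S_H\subseteq V$ with $\omega(H-S_H)>|S_H|$. Using the inclusion of maximal cliques, enlarge $S_H$ to a separator $S_G$ of $G$ by adjoining, for every edge of $S_H$ lying on the intersection of two adjacent maximal cliques of $H$, the missing $k-2$ vertices of the enclosing $k$-clique of $G$. Since the components of $H-S_H$ are unions of maximal $(k+1)$-cliques of $G$, one obtains $\omega(G-S_G)\ge\omega(H-S_H)$. A careful count, exploiting that edges of $S_H$ share endpoints when they come from a common enclosing clique of $G$, is meant to yield $|S_G|\le\frac{k+1}{3}|S_H|$. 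Combined with $\tau(G)\ge(k+1)/3$ this gives
\[
\tfrac{k+1}{3}\,|S_H|\;\ge\;|S_G|\;\ge\;\tfrac{k+1}{3}\,\omega(G-S_G)\;\ge\;\tfrac{k+1}{3}\,\omega(H-S_H),
\]
so $|S_H|\ge\omega(H-S_H)$, contradicting the choice of $S_H$.

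The main obstacle is the accounting in stage two: a naive clique-by-clique blow-up of $S_H$ only produces $|S_G|\le\frac{k+1}{2}|S_H|$, and the missing factor of $2/3$ is precisely where the constant $(k+1)/3$ has to be squeezed out. Obtaining it forces the spanning $2$-tree built in stage one to be aligned with the clique-tree of $G$, so that the extensions of adjacent edges of $S_H$ overlap substantially; making this overlap argument quantitative is the technical heart of the proof.
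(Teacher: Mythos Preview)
The paper does not prove Theorem~\ref{theo:ktreehamilsuf}; it is quoted verbatim from Broersma, Xiong and Yoshimoto \cite{BXY07} and used as a black box, so there is no ``paper's own proof'' to compare your attempt against.

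As for the attempt itself: the overall architecture --- build a spanning $2$-tree $H$ of $G$ and then argue that $H$ is $1$-tough so that Theorem~\ref{theo:ktreehamileq} (equivalently, the $k=2$ case) applies --- is indeed the strategy of \cite{BXY07}. However, what you have written is a plan, not a proof, and you are candid about this. Two points deserve emphasis. First, in Stage~1 the sentence ``the $2$-tree built so far contains at least one edge $e_i\subseteq K_i$'' is not automatic: a spanning $2$-tree of a $(k+1)$-clique misses many edges, and one must maintain an invariant (e.g.\ that $H$ restricted to every maximal clique of the current $k$-tree is connected) to guarantee an eligible $e_i$ at every step. Second, and more seriously, Stage~2 is left open: you correctly observe that a naive blow-up of $S_H$ yields only $|S_G|\le \frac{k+1}{2}|S_H|$, and the passage to the sharper constant $\frac{k+1}{3}$ is precisely the content of the theorem. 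In \cite{BXY07} this is handled by a careful analysis of how simplicial vertices and separators of the $2$-tree sit inside the $(k+1)$-cliques of $G$; your ``overlap'' heuristic points in the right direction but does not substitute for that argument. As it stands, the proposal identifies the correct reduction and the correct bottleneck, but does not close the gap.
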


\begin{lemma} {\rm\cite{BXY07}}  \label{lema:simp} 
Let $G\neq K_k$ be a $k$-tree ($k\geq 2$). 
Then $\tau(G-\{v\}) \geq \tau(G)$ for all simplicial vertex $v$ of $G$.
\end{lemma}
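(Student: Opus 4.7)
The plan is to argue directly from the definition of toughness. I will set $H := G - \{v\}$, dispose of the case in which $H$ is a complete graph (for which $\tau(H) = \infty$ and the inequality is trivial), and otherwise show that $|S'|/\omega(H - S') \geq \tau(G)$ for every $S' \subseteq V(H)$ with $\omega(H - S') \geq 2$; taking the infimum over such $S'$ will give the bound $\tau(H) \geq \tau(G)$.

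The key structural input is simpliciality of $v$: the set $N_G(v)$ is a clique of $G$ lying inside $V(H)$, so for any $S'$ as above, $N_G(v) \setminus S'$ is a clique in $H - S'$ and hence is contained in a single connected component of $H - S'$. I will then split into two cases according to whether $N_G(v) \subseteq S'$, and in each case compare $\omega(G - S')$ with $\omega(H - S')$. If $N_G(v) \subseteq S'$, then $v$ becomes isolated in $G - S'$, giving $\omega(G - S') = \omega(H - S') + 1 \geq 3$; applying the definition of $\tau(G)$ to the (now certainly separating) set $S'$ yields the stronger inequality $|S'| \geq \tau(G)(\omega(H - S') + 1)$, which implies the desired bound. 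If instead $N_G(v) \not\subseteq S'$, then reinserting $v$ merely attaches it to the unique component of $H - S'$ meeting $N_G(v)$, so $\omega(G - S') = \omega(H - S') \geq 2$ and the definition of $\tau(G)$ gives $|S'| \geq \tau(G)\omega(H - S')$ directly.

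I do not expect a serious obstacle here; the heart of the argument is the observation that simpliciality prevents $v$ from reconnecting distinct components of $H - S'$ when it is reinserted. The only point requiring care is to verify that $S'$ is genuinely a separator of $G$ in each case so that the definition of $\tau(G)$ is applicable, and this holds because $\omega(G - S') \geq \omega(H - S') \geq 2$ in both subcases. A minor bonus is that the argument never invokes the $k$-tree structure beyond the existence of the simplicial vertex, so essentially the same proof yields the statement for any graph $G$ possessing a simplicial vertex $v$ such that $G - \{v\}$ is non-complete.
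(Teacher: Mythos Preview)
Your proof is correct. The paper itself does not supply a proof of this lemma; it is quoted from Broersma, Xiong, and Yoshimoto \cite{BXY07} and used as a black box (in the subsequent corollary and in the proof of Theorem~\ref{theo:toug-C4}), so there is no in-paper argument to compare against.

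For the record, your argument is the standard one and is essentially what appears in \cite{BXY07}: for any cutset $S'$ of $H = G - \{v\}$, the set $N_G(v) \setminus S'$ is a clique and therefore lies in at most one component of $H - S'$, so reinserting $v$ cannot merge components and $\omega(G - S') \geq \omega(H - S') \geq 2$. Thus $S'$ is also a cutset of $G$, and the defining inequality $|S'| \geq \tau(G)\,\omega(G - S') \geq \tau(G)\,\omega(H - S')$ gives what you need. Your closing observation is correct as well: nothing in the argument uses the $k$-tree hypothesis beyond the simpliciality of $v$, so the inequality holds for any graph possessing a simplicial vertex.
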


\begin{corollary}
Let $G\neq K_k$ be a $k$-tree ($k\geq 2$) and $SI$ be the set of simplicial vertices of $G$. Then $\tau(G- SI) \geq \tau(G)$.
\end{corollary}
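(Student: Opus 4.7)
The plan is to deduce the corollary from Lemma \ref{lema:simp} by removing the simplicial vertices of $G$ one at a time. Enumerate $SI = \{v_1, \ldots, v_s\}$ in an arbitrary order and set $G_0 = G$ and $G_i = G_{i-1} - \{v_i\}$, so that $G_s = G - SI$. The goal is to apply the lemma at each step and chain the resulting inequalities $\tau(G_i) \geq \tau(G_{i-1})$ to conclude $\tau(G - SI) \geq \tau(G)$.

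Applying the lemma at step $i$ requires two hypotheses: that $v_i$ is still simplicial in $G_{i-1}$, and that $G_{i-1}$ is a $k$-tree distinct from $K_k$. The first is immediate, because the neighborhood of $v_i$ in $G_{i-1}$ is $N_G(v_i) \setminus \{v_1, \ldots, v_{i-1}\}$, a subset of the clique $N_G(v_i)$ and hence itself a clique. For the second, I would use the fact that a simplicial vertex of a $k$-tree has degree exactly $k$ (its neighborhood is a clique, so it has size at most $k$ by the clique-number bound $k+1$, and it has size at least $k$ since the minimum degree is $k$). Consequently, deleting a simplicial vertex from a $k$-tree on $n \geq k+2$ vertices yields a $k$-tree on $n-1$ vertices; the only way the operation escapes the class of $k$-trees is the reduction from $K_{k+1}$ down to $K_k$.

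If no intermediate $G_{i-1}$ equals $K_k$, the lemma applies at every step and the chain $\tau(G_s) \geq \tau(G_{s-1}) \geq \cdots \geq \tau(G_0) = \tau(G)$ finishes the argument. If on the contrary some $G_{i^* - 1}$ equals $K_k$, then every subsequent $G_j$ is a complete graph on at most $k$ vertices (or is empty, if $G = K_{k+1}$ and all of $SI$ is peeled), whence $\tau(G_s) = \infty$ and the inequality is trivial under the standard convention $\tau(K_n) = \infty$. The main point requiring care is thus the bookkeeping around the edge case where the iteration reaches $K_k$; once that is handled, the proof is a direct induction on $|SI|$ driven entirely by Lemma \ref{lema:simp}.
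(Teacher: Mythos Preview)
Your proposal is correct and follows essentially the same approach as the paper: enumerate $SI$, peel off the simplicial vertices one at a time, and chain the inequalities from Lemma~\ref{lema:simp}. You are in fact more careful than the paper's own proof, which omits the verification that each $G_{i-1}$ remains a $k$-tree distinct from $K_k$ and that $v_i$ stays simplicial; your handling of the degenerate case where the iteration reaches $K_k$ is a nice addition.
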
 

\proof  
Consider $SI=\{v_1,\dots,v_s \}$ and the subgraphs $G_1=G-\{v_1\}, 
G_2=G_1-\{v_2\}, \dots, G_s=G_{s-1}-\{v_s\}$ of $G$.
By Lemma \ref{lema:simp},   $\tau(G -  SI) =\tau(G_s) \geq \dots \geq \tau(G_1) \geq \tau(G)$.    \qed \\

%

\section{Clique-tree related subclasses of \emph{RAN}s}

In this section, several subclasses of \emph{RAN}s are defined, based on the structure of its unique clique-tree.
This approach  will allow us to present a detailed analysis of the toughness (and hamiltonicity)  of \emph{RAN}s.

Let  $G$ be a \emph{RAN} and $q_i$ and $q_j$  be two vertices of degree $4$ of the clique-tree $T$ of $G$. 
Let $P_{i,j}=\langle q_i, q_1,  \dots, q_p, q_j \rangle$  be the path joining $q_i$ and $q_j$ in $T$ 
such that $q_i$ and $q_j$ are adjacent or the degree of all vertices $q_k, 1\leq k \leq p$, is less than or equal to  $3$.
This path is called a {\em neat path} of $G$. 

Let $P_{i,j}$ be a neat path of $T$.
Consider the internal vertices of $P_{i,j}$,  $P=\langle q_1, q_2, \ldots, q_p \rangle$.
If  $P$ is empty or all the vertices of $P$ have degree $3$  it is called a {\em fat path}.
Otherwise it  is called a {\em slim path};  it has at least one vertex of degree $2$ and  $p\geq 1$. 

All graphs considered for now on are non-complete graphs. 
The smallest non-complete \emph{RAN}  has 5 vertices  and,  up to isomorphism, establish a unitary class $C_0$. 


Let  $G=(V,E)$ be a \emph{RAN} on $n\geq 6$ vertices  and $T=(V_T, E_T)$ its clique-tree.
\begin{itemize}

 \item $G$  belongs to $C_1$ if  $T$ is a  4-regular tree.
  
    $G$ has $n=8+3\ell$ vertices, $\ell \geq 0$,  and $|SI|=4+2\ell$. 
    
\item $G$  belongs to $C_2$ if  $T$ is a  3-regular tree.

$G$ has  $n=7+2\ell $ vertices, $\ell\geq 0$, and $|SI|=\lfloor \frac{n}{2}\rfloor = \frac{6+2\ell }{2} = 3+\ell$.

\item $G$  belongs to $C_3$ if  $T$ is a  2-regular tree.
   
 $G$ has $n\geq 6$ and $|SI|=2$. Furthermore, $G$  is a  3-path graph. 
 
\item $G$    belongs to $C_4$ if    $T$ is not $k$-regular and it has no vertices of degree $4$.  

$G$ has  $n\geq 8$ vertices  and $|SI|\geq 3$. 

\item $G$     belongs to $C_5$ if $T$ is not $k$-regular and  it has exactly one vertex of degree $4$. 

$G$ has   $n\geq 9$ vertices and $|SI|\geq 4$.

\item $G$     belongs to $C_6$   if  $T$ is not $k$-regular and it has at least one fat path.

$G$ has   $n\geq 12$ vertices and $|SI|\geq 6$.

\item $G$     belongs to $C_7$   if   
$T$ is not $k$-regular, it has no fat paths  and it  has at least 
a neat path $P_{i,j} =  \langle q_i, P, q_j \rangle$ with one of the following properties:
\begin{itemize}
\item $|Q_i \cap Q_j|=2$ with $p \geq 2$ or 

\item $|Q_i \cap Q_j|=  1$ with $p \geq 3$ and $G$ contains at least one maximal clique $Q_k$ 
such that $(Q_i \cup Q_j) \supset Q_k$, $d(q_k) =3$ or

\item $|Q_i \cap Q_j|= 0$ with $p \geq 4$ and $G$ contains at least two maximal cliques $Q_k$ and $Q_\ell$ 
such that $(Q_i \cup Q_j) \supset Q_k$, $(Q_i \cup Q_j) \supset Q_\ell $, $d(q_k) =d(q_\ell) = 3$.

\end{itemize}

$G$ has  $n\geq 13$ vertices and $|SI|\geq 6$.

\item $G$    belongs to $C_8$ if  $G$  does not belong to any one of the classes defined above.  

$G$ has  $n\geq 12$ vertices, $|SI|\geq 6$.  

\end{itemize}

It is important to note that classes $C_7$ and $C_8$ encompass all the {\em RAN}s that have only slim paths.
The following result is immediate.

\begin{theorem}
Classes $C_0, C_1, C_2, C_3, C_4, C_5,  C_6,C_7$ and $C_8$  establish a partition of the
non-complete  Random Apollonian Networks.  
\end{theorem}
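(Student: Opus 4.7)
The plan is a structured case analysis, splitting the non-complete RANs first by their order $n$, then (for $n \geq 6$) by whether the clique-tree $T$ is $k$-regular, then by the number $n_4$ of degree-$4$ vertices of $T$, and finally, when $n_4 \geq 2$, by the kinds of neat paths that $T$ exhibits. At each level I verify that the options are mutually exclusive and jointly cover everything generated by the inductive construction of a \emph{RAN}.

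The base step handles $n = 5$: by Proposition~1 we have $|V_T| = 2$, so there is a unique non-complete \emph{RAN} on five vertices, and it sits in $C_0$. For $n \geq 6$, $|V_T| \geq 3$, so $T$ has at least one internal vertex, and Proposition~1(5) bounds every degree in $T$ by $4$. If $T$ is $k$-regular, the existence of an internal vertex forces $k \in \{2,3,4\}$, placing $G$ in exactly one of $C_3$, $C_2$, $C_1$; these three classes are mutually disjoint (different $k$) and each is disjoint from $C_4,\ldots,C_8$, which by definition require $T$ to be non-regular.

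If $T$ is not $k$-regular, the argument splits on $n_4$. When $n_4 = 0$ we have $G \in C_4$ and when $n_4 = 1$ we have $G \in C_5$; these alternatives are mutually exclusive by the value of $n_4$. When $n_4 \geq 2$, selecting a pair of degree-$4$ vertices of $T$ at minimum mutual distance yields a path whose internal vertices all have degree at most $3$, i.e.\ a neat path; so under $n_4 \geq 2$ the clique-tree always contains at least one neat path, which is either fat or slim. If some neat path is fat, then $G \in C_6$; otherwise every neat path is slim, and $G$ belongs to $C_7$ or $C_8$ depending on whether some slim neat path satisfies one of the three itemized structural conditions. Classes $C_6$ and $C_7 \cup C_8$ are separated by the presence of a fat path, $C_7$ and $C_8$ are separated by the existence of a qualifying slim path, and $C_8$ is literally defined as the complement of $C_0 \cup \cdots \cup C_7$ inside the non-complete RANs, which guarantees exhaustivity.

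The only real obstacle is bookkeeping: one must check that the $C_7$ structural conditions are never accidentally satisfied by a graph already in $C_6$, which is immediate because $C_7$ is explicitly gated by ``no fat paths'', and that the ``closest pair of degree-$4$ vertices'' construction is always possible when $n_4 \geq 2$, which is immediate because $T$ is a finite tree. The remainder is a routine verification that each non-complete \emph{RAN}, filtered through the criteria in the stated order ($n$, regularity of $T$, $n_4$, fat/slim structure), lands in exactly one class, giving the claimed partition.
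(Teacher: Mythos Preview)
Your case analysis is correct and matches the paper's approach, which is simply to declare the result ``immediate'' from the definitions without supplying any argument. You have supplied precisely the routine verification the paper omits: splitting first on $n$, then on regularity of $T$, then on the number of degree-$4$ vertices, and finally on the fat/slim dichotomy, with $C_8$ absorbing whatever remains by its complementary definition.
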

  
Some  observations about non-isomorphic \emph{RAN}s and their clique-trees can be stated. 
Graphs  with the same number of vertices
can belong to different classes or to the same class $C_i$ and their clique-trees can be isomorphic or not, since 
the isomorphism depends only on the structure of the tree.
The graphs $G_1$, $G_2$, $G_3$ and $G_4$, depicted in Figure  \ref{fig:noniso},
are all non-isomorphic \emph{RAN}s.
Graphs $G_1$, $G_2$  and $G_3$ belong to $C_7$;
$G_1$ and $G_2$ have isomorphic clique-trees and $G_1$ and $G_3$  do not.
Graph $G_4$ belongs to $C_8$; $G_1$ and $G_4$ have also isomorphic clique-trees.

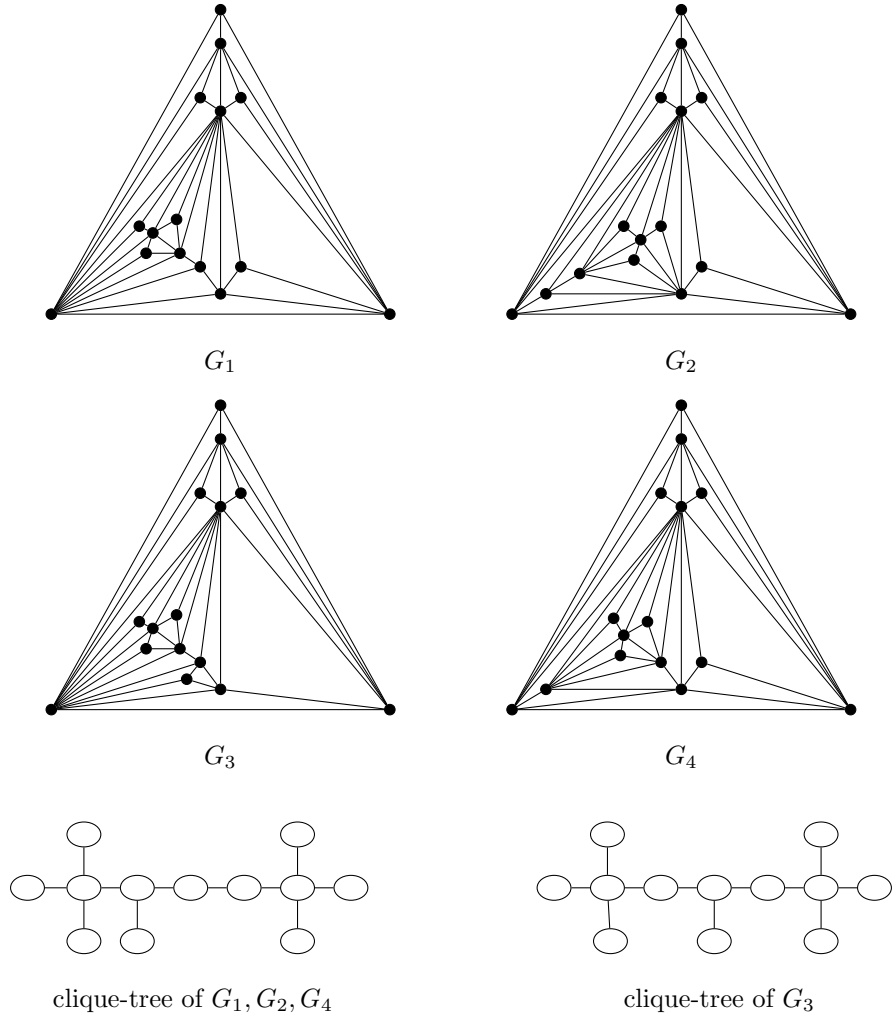
\begin{figure}[!h] \begin{center}
\tikzset{
    graphnode/.style={
      draw,fill,
      circle,
      minimum size=1.4mm,
      inner sep=0,
    }
}
\begin{minipage}{6cm}
\begin{center}
\begin{tikzpicture}
[every node/.style={graphnode},scale=.9]

   \node  (b) at (-0.5,0) {};
  \node   (c) at (4.5,0) {};
  \node  (4) at (2,4.5) {};
 
  \node   (d) at (2,4) {};
  \node   (a) at (2,3) {}; 
   \node  (1) at (2,0.3) {};
   
 \node  (2) at (1.7,3.2) {};
 \node  (3) at (2.3,3.2) {};

 \node  (5) at (1.7,0.7) {};
 \node  (5') at (2.3,0.7) {};

\node  (6) at (1.4,0.9) {};
\node  (7) at (1,1.2) {};

\node  (8) at (0.8,1.3) {};
\node  (9) at (0.9,0.9) {};
\node  (10) at (1.35,1.4) {};
 \foreach \from/\to in {a/b, a/c,a/d,b/c,b/d,c/d,4/b,4/c,4/d,1/a,1/b,1/c,2/d,2/a,2/b,3/a,3/c,3/d,5'/1,5'/a,5'/c,5/b,5/1,5/a,6/5,6/b,6/a,7/b,7/a,7/6,8/7,8/b,8/a,9/b,9/7,9/6,10/a,10/7,10/6}
   \draw (\from) -- (\to);
\node at (2,-0.7) [draw=none,fill=none] {$G_1$};
\end{tikzpicture}
\end{center}
\end{minipage}\hfill
\begin{minipage}{6cm}
\begin{center}
\begin{tikzpicture}[every node/.style={graphnode},scale=.9]

\node  (b) at (-0.5,0) {};
  \node   (c) at (4.5,0) {};
  \node  (4) at (2,4.5) {};
 
  \node   (d) at (2,4) {};
  \node   (a) at (2,3) {}; 
   \node  (1) at (2,0.3) {};
   
 \node  (2) at (1.7,3.2) {};
 \node  (3) at (2.3,3.2) {};

 \node  (5) at (0,0.3) {};
 \node  (5') at (2.3,0.7) {};

\node  (6) at (0.5,0.6) {};
\node  (7) at (1.4,1.1) {};

\node  (8) at (1.15,1.3) {};
\node  (9) at (1.7,1.3) {};
\node  (10) at (1.3,0.8) {};
 \foreach \from/\to in {a/b, a/c,a/d,b/c,b/d,c/d,4/b,4/c,4/d,1/a,1/b,1/c,2/d,2/a,2/b,3/a,3/c,3/d,5'/1,5'/a,5'/c,5/b,5/1,5/a,6/5,6/1,6/a,7/1,7/a,7/6,8/7,8/6,8/a,9/a,9/7,9/1,10/1,10/7,10/6}
   \draw (\from) -- (\to);
    \node at (2,-0.7) [draw=none,fill=none] {$G_2$};
\end{tikzpicture}
\end{center}
\end{minipage}\\

\medskip

\begin{minipage}{6cm}
\begin{center}
\begin{tikzpicture}[every node/.style={graphnode},scale=.9]
   \node  (b) at (-0.5,0) {};
  \node   (c) at (4.5,0) {};
  \node  (4) at (2,4.5) {};
 
  \node   (d) at (2,4) {};
  \node   (a) at (2,3) {}; 
   \node  (1) at (2,0.3) {};
   
 \node  (2) at (1.7,3.2) {};
 \node  (3) at (2.3,3.2) {};

 \node  (5) at (1.7,0.7) {};
 \node  (5') at (1.5,0.45) {};

\node  (6) at (1.4,0.9) {};
\node  (7) at (1,1.2) {};

\node  (8) at (0.8,1.3) {};
\node  (9) at (0.9,0.9) {};
\node  (10) at (1.35,1.4) {};
 \foreach \from/\to in {a/b, a/c,a/d,b/c,b/d,c/d,4/b,4/c,4/d,1/a,1/b,1/c,2/d,2/a,2/b,3/a,3/c,3/d,5'/1,5'/5,5'/b,5/b,5/1,5/a,6/5,6/b,6/a,7/b,7/a,7/6,8/7,8/b,8/a,9/b,9/7,9/6,10/a,10/7,10/6}
   \draw (\from) -- (\to);
\node at (2,-0.7) [draw=none,fill=none] {$G_3$};
\end{tikzpicture}
\end{center}
\end{minipage}\hfill
\begin{minipage}{6cm}
\begin{center}
\begin{tikzpicture}[every node/.style={graphnode},scale=.9]
\node  (b) at (-0.5,0) {};
  \node   (c) at (4.5,0) {};
  \node  (4) at (2,4.5) {};
 
  \node   (d) at (2,4) {};
  \node   (a) at (2,3) {}; 
   \node  (1) at (2,0.3) {};
   
 \node  (2) at (1.7,3.2) {};
 \node  (3) at (2.3,3.2) {};

 \node  (5) at (0,0.3) {};
 \node  (5') at (2.3,0.7) {};

\node  (6) at (1.7,0.7) {};
\node  (7) at (1.15,1.1) {};

\node  (8) at (1,1.35) {};
\node  (9) at (1.5,1.3) {};
\node  (10) at (1.1,0.8) {};
 \foreach \from/\to in {a/b, a/c,a/d,b/c,b/d,c/d,4/b,4/c,4/d,1/a,1/b,1/c,2/d,2/a,2/b,3/a,3/c,3/d,5'/1,5'/a,5'/c,5/b,5/1,5/a,6/5,6/1,6/a,7/5,7/a,7/6,8/7,8/5,8/a,9/a,9/7,9/6,10/6,10/7,10/5}
   \draw (\from) -- (\to);
    \node at (2,-0.7) [draw=none,fill=none] {$G_4$};
\end{tikzpicture}
\end{center}
\end{minipage}\\

\medskip

\begin{center}
\begin{tikzpicture}
  \tikzstyle{vert}=[ellipse,draw,fill=none,minimum width=4.5mm, minimum height=1mm]
      
         \node  [vert]  (1) at (-0.5,0) {};
      \node [vert]  (2) at (0.25,0) {};
    \node [vert]  (3) at (0.96,0) {};
   \node  [vert]  (4) at (1.67,0) {};
  \node  [vert]  (5) at (2.38,0) {}; 
   \node [vert]   (6) at (3.09,0) {};
\node [vert]   (7) at (3.8,0) {};
\node [vert]   (8) at (0.25,0.71) {};
\node [vert]   (9) at (3.09,0.71) {};
\node [vert]   (10) at (0.25,-0.71) {};
\node [vert]    (11) at (0.96,-0.71) {};
\node [vert]   (12) at (3.09,-0.71) {};
 \foreach \from/\to in {1/2,2/3,3/4,4/5,5/6,6/7,2/8,2/10,3/11,6/9,6/12}
   \draw (\from) -- (\to);
\node at (1.7,-1.5) [draw=none,fill=none] {clique-tree  of $G_1,G_2,G_4$};

  \node  [vert]  (a1) at (6.5,0) {};
      \node  [vert]  (a2) at (7.21,0) {};
    \node  [vert]   (a3) at (7.92,0) {};
   \node   [vert]  (a4) at (8.63,0) {};
  \node  [vert]   (a5) at (9.34,0) {}; 
   \node  [vert]  (a6) at (10.05,0) {};
  \node   [vert]  (a7) at (10.76,0) {};
  
   \node  [vert]   (a8) at (7.21,0.71) {};
\node   [vert] (a9) at (10.05,0.71) {};

\node  [vert]  (a10) at (7.25,-0.71) {};
\node  [vert]  (a11) at (8.63,-0.71) {};
\node  [vert]  (a12) at (10.05,-0.71) {};
 \foreach \from/\to in {a1/a2,a2/a3,a3/a4,a4/a5,a5/a6,a6/a7,a2/a8,a2/a10,a4/a11,a6/a9,a6/a12}
   \draw (\from) -- (\to);
    \node at (8.7,-1.5) [draw=none,fill=none] {clique-tree of  $G_3$};
\end{tikzpicture}
\end{center}

\vspace{-0.7cm}
\caption{Non-isomorphic \emph{RAN}s}
\label{fig:noniso}\end{center}
\end{figure}


\section{Main results -- toughness}

In this section,  results on the toughness of the subclasses defined in Section 5 are presented. 

\begin{theorem}\label{theo:toug-C0}  Let $G \in C_0$. Then 
$\tau(G_0)=\frac{3}{2}$. 
\end{theorem}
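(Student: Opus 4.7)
The plan is to first pin down $G$ explicitly. Since $C_0$ consists of the non-complete \emph{RAN}s on $5$ vertices, and a \emph{RAN} on $5$ vertices is built from $K_4$ on $\{a,b,c,d\}$ by inserting a new vertex $e$ inside one triangular face, say $\{a,b,d\}$, there is (up to isomorphism) a single such graph. Its edge set is everything except $\{c,e\}$, so $G$ is $K_5$ minus an edge, with simplicial vertices $c$ and $e$. Equivalently, by the \emph{RAN}/\emph{SC} $3$-tree correspondence, the clique-tree of $G$ has the two maximal cliques $\{a,b,c,d\}$ and $\{a,b,d,e\}$ as its only nodes, and the single edge between them corresponds to the unique minimal vertex separator $\{a,b,d\}$.

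For the upper bound, I would exhibit the separator $S=\{a,b,d\}$. Since $c$ and $e$ are non-adjacent in $G$, $G-S$ has exactly two components (the isolated vertices $c$ and $e$), so
\[
\tau(G)\le \frac{|S|}{\omega(G-S)}=\frac{3}{2}.
\]

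For the lower bound, I would perform a short case analysis on $|S|$ to show $|S|/\omega(G-S)\ge 3/2$ whenever $\omega(G-S)>1$. If $|S|\le 2$, then $G-S$ has at least three vertices spanning at least $\binom{3}{2}-1=2$ edges (the only missing edge in $G$ is $ce$), hence $G-S$ is connected and $S$ is not a separator. If $|S|=3$, then $G-S$ has only two vertices, so $\omega(G-S)\le 2$, and equality holds exactly when the two leftover vertices are non-adjacent, forcing $S=\{a,b,d\}$ and ratio $3/2$. If $|S|\ge 4$, then $G-S$ has at most one vertex, so $S$ is not a separator. Combining both bounds gives $\tau(G)=3/2$.

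There is no real obstacle here: once $G$ is identified as $K_5$ minus an edge, the argument reduces to a finite check on vertex subsets, controlled by the observation that $G$ has a unique non-edge. The only care needed is to ensure the ``not a separator'' cases are excluded from the minimum defining $\tau$, which is immediate from the connectedness argument above.
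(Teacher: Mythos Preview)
Your proof is correct and follows essentially the same approach as the paper, which simply declares the result ``Immediate'': you have just spelled out the direct verification on $K_5$ minus an edge that the paper leaves to the reader.
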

\begin{proof}  Immediate.   \qed \\
\end{proof}

We consider $G=(V,E)$  a \emph{RAN}  on $n \geq  6$ vertices from Theorem \ref{theo:toug-C1} through
Theorem \ref{theo:toug-C7}. 

\begin{theorem}\label{theo:toug-C1}  Let $G\in C_1$. 
Then 
$\tau(G)=\frac{n+4}{2n-4}$. 
\end{theorem}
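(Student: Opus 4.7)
The plan is to bound $\tau(G)$ in both directions.

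For the upper bound I would exhibit the separator $S := V \setminus SI$. Since $G \in C_1$ has a $4$-regular clique-tree $T$, the degree count $4i + \lambda = 2(i + \lambda - 1)$ together with $i + \lambda = n-3$ yields $|SI| = \lambda = (2n-4)/3$ and $|S| = (n+4)/3$. Unique-representability (Theorem~\ref{theo:caract-urchor}) forces every internal clique of $T$ to realize each of its four $3$-subsets as a distinct minimal vertex separator, so every internal-clique vertex is non-simplicial, whereas each leaf clique contributes exactly one simplicial vertex. Any two simplicial vertices then lie in distinct leaf cliques and are therefore non-adjacent, so deleting $S$ isolates each simplicial vertex, giving $\omega(G-S) = |SI| = (2n-4)/3$ and $\tau(G) \leq (n+4)/(2n-4)$.

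For the lower bound I would show $(2n-4)|S| \geq (n+4)\,\omega(G-S)$ for every separator $S$. Writing $s=|S|$, $s_I = |S \cap SI|$, $s_N = s-s_I$, and decomposing $\omega(G-S) = a + b$, where $a$ counts components consisting of a single simplicial vertex and $b$ counts components with at least one non-simplicial vertex (a dichotomy, since simplicials are pairwise non-adjacent), the trivial bounds $a \leq (2n-4)/3 - s_I$ and $b \leq (n+4)/3 - s_N$ give $\omega(G-S) \leq n - s$. Substituting into the target inequality reduces it to $s \geq (n+4)/3$, which settles the range $s \geq (n+4)/3$ at once.

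For the remaining range $s < (n+4)/3$ the bound on $a$ must be sharpened. Each isolated simplicial $v$ has $N(v) \subseteq S_N$, and $N(v)$ is a $3$-subset of a unique internal clique $A_{p(v)}$; setting $m_j = |A_j \cap S_N|$, this gives $a \leq \sum_j \binom{m_j}{3}$. The key structural ingredient, which I would prove first, is that for every non-simplicial $w$ the subtree $\mathbb{Q}(w)$ of $T$ has each internal-in-$T$ vertex of degree $3$ in $\mathbb{Q}(w)$ (each such $4$-clique excludes $w$ from exactly one of its four neighbors) and each leaf-in-$T$ vertex of degree $1$ in $\mathbb{Q}(w)$, so $L_w := |N(w) \cap SI| = I_w + 2 \geq 3$, where $I_w$ is the number of internal cliques containing $w$. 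Combining this rigidity with $\sum_j m_j = \sum_{w \in S_N} I_w$, the universal lower bound $|N(C_i)| \geq 3$ (since $G$ is $3$-connected), and the bound $b \leq (n+4)/3 - s_N$ closes the inequality by a case analysis on the distribution of $S_N$ across internal cliques of $T$.

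The main obstacle is precisely this small-$s$ case: because $\binom{m_j}{3}$ is super-linear in $m_j$ (jumping from $1$ to $4$ as $m_j$ passes from $3$ to $4$), the bound on $a$ is not amenable to a single averaging step. The argument must exploit the $4$-regularity of $T$ together with the degree-$3$-at-internals structure of each $\mathbb{Q}(w)$ to trade a large $a$ off against a simultaneous decrease in $b$, so that the combined contribution $a + b$ never forces the ratio $|S|/\omega(G-S)$ below $(n+4)/(2n-4)$.
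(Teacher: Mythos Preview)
Your upper bound via $S = V \setminus SI$ matches the paper exactly.

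For the lower bound the paper takes a much shorter route. Rather than bounding $\omega(G-S)$ for arbitrary $S$, it considers only the perturbation $A = (V \setminus SI) \setminus \{v\}$ for a non-simplicial $v$, observes that the subtree $T_v$ induced by $\mathbb{Q}(v)$ in $T$ is $3$-regular (this is precisely your ``key structural ingredient''), so $v$ lies in at least $c \geq 3$ simplicial cliques; restoring $v$ therefore merges $c$ isolated simplicials into a single component, giving $|A|/\omega(G-A) = (3+\ell)/(5+2\ell-c) > (4+\ell)/(4+2\ell)$. The paper stops there and declares $\tau(G)=(n+4)/(2n-4)$. Whether one reads this as a complete proof or as the decisive step of an implicit exchange argument, the point is that the structural fact $L_w \geq 3$ you isolated is applied \emph{directly} to the candidate optimum $V\setminus SI$, not fed through a global $\sum_j \binom{m_j}{3}$ estimate.

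Your own lower bound has a genuine gap. You openly leave the small-$s$ case to an unspecified ``case analysis on the distribution of $S_N$ across internal cliques of $T$'' together with a hoped-for trade-off between $a$ and $b$; neither is an argument. The superlinearity of $\binom{m_j}{3}$ that you flag is a real obstruction to closing by averaging, and nothing in your outline supplies the missing constraint tying the $m_j$'s together across edges of $T$. The paper's perturbation idea suggests a cleaner way out: first discard simplicial vertices from any candidate separator (they raise $|S|$ without raising $\omega$), then argue that enlarging $S \subseteq V \setminus SI$ toward $V \setminus SI$ can only lower the ratio $|S|/\omega(G-S)$, using your own $L_w \geq 3$ at each step. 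That sidesteps the combinatorial case split you are stuck on.
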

\begin{proof}  
If  $n=8$, trivially,    $\tau(G)=1$.
Otherwise,  $n=8+3\ell$, $\ell  \geq 1$.
Consider the removal of the set $\bigcup_{S\in \mathbb{S}} S$ of non-simplicial vertices. We obtain a disconnected graph with $|SI|$ components and the value
$$\frac{n-|SI|}{|SI|}=\frac{8+3\ell -(4+2\ell)}{4+2\ell} = \frac{4+\ell}{4+2\ell} =\frac{n+4}{2n-4}.$$

Let us now consider a new separator consisting of all elements of $\bigcup_{S\in \mathbb{S}} S$ 
except  one, a  non-simplicial vertex $v$. 
By the induced subtree property, the set ${\mathbb Q}(v)$  of maximal cliques containing the vertex $v$
 induces a subtree of  the clique-tree $T$ of $G$.
Let   $T_v$ be this subtree; $T_v$ is a  $3$-regular tree. 
So, vertex  $v$ belongs to at least three simplicial cliques. 
 
Consider the set $A =  \bigcup_{S\in \mathbb{S}} S\setminus \{ v\}$. 
The graph    $G \left[ V\setminus A\right] $ has fewer components than  the graph $G \left[ V\setminus \bigcup_{S\in \mathbb{S}} S \right] $ because the adjacencies of  vertex $v$ are kept (at least 3 simplicial vertices). 
So the number of components of  $G \left[ V\setminus A\right] $ is $4+2\ell -c+1=5+2\ell-c$, $c\geq 3$.  
As $|A|=n-|SI|-1$,    $  \frac{4+\ell}{4+2\ell}  < \frac{3+\ell}{5+2\ell-c}$.
Then  $\tau(G)=\frac{n+4}{2n-4}$.  \qed \\
\end{proof}

\begin{corollary} The Goldner-Harary graph belongs to $C_1$. \end{corollary}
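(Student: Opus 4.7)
The plan is to exhibit the Goldner-Harary graph $GH$ as an explicit planar $3$-tree and then read off its clique-tree structure directly from the construction.

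First I would construct $GH$ as a RAN via the iterative definition of a $3$-tree. Start with $K_4$ on $\{a,b,c,d\}$ and add a vertex $e$ simplicial on the $3$-clique $\{a,b,c\}$; this yields the triangular bipyramid (equivalently $K_5$ minus the edge $de$), which is a planar $3$-tree with two maximal $4$-cliques $Q_1=\{a,b,c,d\}$ and $Q_2=\{a,b,c,e\}$. Then I would add six further simplicial vertices $v_1,\dots,v_6$, one attached to each of the six triangles $\{a,b,d\},\{a,c,d\},\{b,c,d\}$ (the three faces of $Q_1$ other than $\{a,b,c\}$) and $\{a,b,e\},\{a,c,e\},\{b,c,e\}$ (the three faces of $Q_2$ other than $\{a,b,c\}$). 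Since each simplicial addition preserves planarity and each of the six separating triangles is used only once, the resulting graph is a planar uniquely representable $3$-tree on $11$ vertices with $27$ edges, i.e.\ a RAN; I would then verify that this graph is isomorphic to the Goldner-Harary graph (same vertex and edge count, same triangulated bipyramidal structure).

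Next I would write down the clique-tree $T$ explicitly. The $8$ maximal cliques are $Q_1$, $Q_2$, and the six $4$-cliques $R_i$ obtained by adjoining each $v_i$ to its attachment triangle. The minimal vertex separators are exactly the seven separating triangles used in the construction: $\{a,b,c\}$ (shared by $Q_1$ and $Q_2$), and the six triangles each shared between one of $Q_1,Q_2$ and the corresponding $R_i$. Consequently $T$ has the edge $\{Q_1,Q_2\}$ together with three pendant edges $\{Q_1,R_i\}$ for the three $R_i$'s sitting on the $d$-side and three pendant edges $\{Q_2,R_j\}$ for those on the $e$-side.

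Finally I would conclude: every non-leaf of $T$ (namely $Q_1$ and $Q_2$) has degree exactly $4$, so $T$ is a $4$-regular tree, and $n=11=8+3\cdot 1$ matches the parameters of $C_1$ with $\ell=1$. Hence $GH\in C_1$. The main (minor) obstacle is the bookkeeping that the seven triangles used as separators are pairwise distinct and that no unused separating triangle of the intermediate $3$-trees becomes a multiplicity-$2$ separator, which is exactly what the Theorem~\ref{theo:caract-urchor} characterization of ur-chordal graphs requires; this is immediate from the choice of attachment faces but should be noted explicitly.
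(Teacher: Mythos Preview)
Your argument is correct and follows exactly the route the paper intends: exhibit the Goldner--Harary graph as a uniquely representable planar $3$-tree on $11$ vertices and check that its clique-tree is $4$-regular. The paper itself gives no proof for this corollary, treating it as immediate from the definition of $C_1$ and the well-known structure of the Goldner--Harary graph; your explicit construction simply spells out what the paper leaves to the reader.
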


\begin{theorem}\label{theo:toug-C2} 
Let  $G\in C_2$. 
Then 
$\tau(G)=\frac{n+1}{n-1}$. 
\end{theorem}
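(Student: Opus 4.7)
The plan is to mirror the proof of Theorem~\ref{theo:toug-C1}: I would exhibit $S^\star = V\setminus SI$ as the candidate minimum-ratio cutset, verify the value, and rule out local perturbations. Since in a non-complete \emph{RAN} any maximal clique contains at most one simplicial vertex, simplicial vertices are pairwise non-adjacent, so $G-S^\star$ is edgeless with $\omega(G-S^\star)=|SI|=3+\ell$. Combined with $|S^\star|=n-|SI|=4+\ell$, this yields the ratio $(4+\ell)/(3+\ell)=(n+1)/(n-1)$ and hence $\tau(G)\le (n+1)/(n-1)$.

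The structural fact I would prove next is that in $C_2$ every non-simplicial vertex $v$ is adjacent to at least two simplicial vertices. Letting $T_v$ denote the subtree of maximal cliques containing $v$, I would show that every leaf of $T_v$ must already be a leaf of $T$. If some leaf $Q$ of $T_v$ were instead internal in $T$, its three $T$-neighbours $Q',Q'',Q'''$ would contain $v$ in exactly the one lying in $T_v$, say $Q'$; this would force the minimal vertex separators $Q\cap Q''$ and $Q\cap Q'''$ both to equal the unique $3$-subset $Q\setminus\{v\}$, producing a minimal vertex separator of multiplicity at least two and contradicting unique representability (Theorem~\ref{theo:caract-urchor}). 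Since $T_v$ has at least two cliques it has at least two leaves, each a leaf of $T$ and therefore contributing a simplicial vertex of $G$ adjacent to $v$.

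With this fact I would carry out the perturbation check in the style of Theorem~\ref{theo:toug-C1}. For $A = S^\star\setminus\{v\}$ with $v$ non-simplicial and having $k\ge 2$ simplicial neighbours, removing $A$ fuses $v$ with its $k$ simplicial neighbours into a single component while the remaining $|SI|-k$ simplicials stay isolated, so $\omega(G-A)=4+\ell-k$ and a direct cross-multiplication gives $|A|/\omega(G-A)\ge (3+\ell)/(2+\ell)>(4+\ell)/(3+\ell)$. Enlarging $S^\star$ by a simplicial vertex, or deleting several non-simplicial vertices simultaneously, is handled by entirely analogous ratio computations that only worsen the bound, so the value $(n+1)/(n-1)$ is optimal.

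The main obstacle I foresee is precisely the structural fact in the second paragraph: establishing that every non-simplicial vertex in $C_2$ has at least two simplicial neighbours. Once that is in hand, everything else reduces to short ratio manipulations of the same flavour already displayed in the proof of Theorem~\ref{theo:toug-C1}.
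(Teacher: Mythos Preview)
Your approach is essentially identical to the paper's: exhibit $S^\star = V\setminus SI$ as the minimising cutset, then rule out the single-vertex perturbation $A = S^\star\setminus\{v\}$ by using that any non-simplicial $v$ lies in at least two simplicial cliques via the induced subtree $T_v$. Your justification that leaves of $T_v$ must be leaves of $T$ (through the multiplicity contradiction with Theorem~\ref{theo:caract-urchor}) is in fact more explicit than the paper's, which simply asserts it; the paper also treats the universal-vertex case separately, whereas your formulation absorbs it into the inequality $k\ge 2$.
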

\begin{proof} 
If $n=7$,  $\tau(G)=\frac{4}{3}$.
Otherwise, $n=7+2\ell$, $\ell\geq 1$. 
Consider the removal of the set $\bigcup_{S\in \mathbb{S}} S$ of non-simplicial vertices.
We obtain a disconnected graph with $|SI|$ components and  the value 
$$\frac{n-|SI|}{|SI|}=\frac{ 7+2\ell-(3+\ell)} {3+\ell} =\frac{ 4+\ell} {3+\ell}=\frac{n+1}{n-1}.$$

Let us consider another  separator consisting of all elements of $\bigcup_{S\in \mathbb{S}} S$ except  one, a  non-simplicial vertex $v$. 
Consider the set $A =  \bigcup_{S\in \mathbb{S}} S\setminus \{ v\}$ such that  $|A|=n-|SI|-1=3+\ell$, and the graph $G'=G \left[ V\setminus A\right]$. 

As $G \in C_2$, it is possible that there is one universal vertex. If $v$ is this universal vertex, $G'$ is a connected graph and $   \frac{3+\ell}{1}>\frac{4+\ell}{3+\ell }$. 
Otherwise,  $v$ is not a universal vertex, let  $T_v$ be the subtree of  the clique-tree $T$ 
of $G$ induced by the set ${\mathbb Q}(v)$ (induced subtree property). 
We know that every tree with at least two vertices has at least two leaves. 
In our case,  these leaves are simplicial cliques of $G$,  i.e.,  $v$ belongs to at least two simplicial cliques.
So, $G'$ has $3+\ell -c+1= 4+\ell-c$ components, $c \geq 2$,  and $\frac{3+\ell}{4+\ell-c}> \frac{4+\ell}{3+\ell}$. 
Then $\tau(G)=\frac{n+1}{n-1}$.  \qed \\
\end{proof}

In \cite{MW14}, bounds to the toughness of $k$-path graphs, $k\geq 2$, were presented. 
Hence, we can present the following result. 

\begin{theorem}\label{theo:toug-C3}
Let  $G\in C_3$. 
Then 
$\left\{ \begin{array} {ll}
\frac{n}{n-2}   \leq \tau(G) \leq \frac{3}{2}   &   \textrm{ if } n \textrm{ is even} \\\\
\frac{n+1}{n-1}\leq \tau(G) \leq \frac{3}{2}  &  \textrm{ if } n \textrm{ is odd.}\,\,\,\end{array}  \right.$
\end{theorem}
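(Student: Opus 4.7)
The plan is to obtain the result as a direct particularization of the bounds for general $k$-path graphs established in \cite{MW14}, since a \emph{RAN} in $C_3$ is by the definitions of Section 3 and Section 5 exactly a $3$-path graph. Substituting $k=3$ into the formulas of \cite{MW14} and separating by the parity of $n$ is expected to produce the expressions $n/(n-2)$ and $(n+1)/(n-1)$ together with the common upper bound $3/2$.

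For the upper bound I would give a short self-contained verification. Since the clique-tree $T$ of $G$ is a path on the maximal cliques $Q_1, Q_2, \ldots, Q_{n-3}$, each intersection $S_i = Q_i \cap Q_{i+1}$ is a minimal vertex separator of size $3$ by the edge-to-separator correspondence recalled in Section 2. Cutting the edge $(Q_i,Q_{i+1})$ of $T$ splits $T$ into two subpaths, each containing at least one clique and therefore at least one vertex outside $S_i$ (in particular each end of $T$ contributes a simplicial vertex). Hence $G - S_i$ has exactly two components, giving $|S_i|/\omega(G - S_i) = 3/2$ and $\tau(G) \leq 3/2$ for every $n \geq 6$.

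For the lower bound I would invoke \cite{MW14} in the case $k=3$. The underlying observation there is that in a $k$-path graph any separator $S$ minimising $|S|/\omega(G-S)$ can be taken to be a union of minimal vertex separators chosen along the clique-tree path, and two adjacent cuts $S_i$ and $S_{i+1}$ share $k-1$ vertices because both sit inside the single maximal clique $Q_{i+1}$. Consequently cutting two consecutive edges of $T$ leaves an empty middle region and is wasteful, so optimal multi-cuts necessarily skip cliques along the path. Optimising the ratio under this constraint splits naturally into two cases according to the parity of the number of maximal cliques $n-3$, which after substituting $k=3$ reproduces exactly the two stated expressions.

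The only real obstacle is the parity bookkeeping: one has to verify that the $k=3$ specialisation of the \cite{MW14} formulas yields $n/(n-2)$ in the even case and $(n+1)/(n-1)$ in the odd case, rather than an off-by-one variant. No new extremal configuration need be discovered beyond the alternating-skip patterns already analysed in that reference.
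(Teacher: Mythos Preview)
Your approach is essentially the same as the paper's: the paper does not give an independent proof of this theorem but simply records it as the $k=3$ specialization of the toughness bounds for $k$-path graphs from \cite{MW14}, noting that $G\in C_3$ is precisely a $3$-path graph. Your proposal does the same, with the welcome addition of a self-contained verification of the upper bound $3/2$ via a single minimal vertex separator; the paper omits even that.
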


The equalities of the bound values are  achieved by graphs of two subclasses of $k$-path graphs: $k$-ribbon and $k$-fan graphs.

\begin{theorem}\label{theo:toug-C4} Let  $G\in C_4$.
Then 
$\frac{n+2}{n} \leq \tau(G) \leq  \frac{4}{3}$.
\end{theorem}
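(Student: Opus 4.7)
\emph{Upper bound.} Because $G\in C_4$ forces $n_3\ge 1$, the clique-tree $T$ contains at least one node $q$ of degree exactly $3$; let $Q$ be the associated maximal $4$-clique and take $S:=Q$. I claim $\omega(G-Q)=3$, one component per subtree of $T-q$. For two vertices lying in the same subtree, any two consecutive cliques $Q',Q''$ on the $T$-path within that subtree share a non-$Q$ vertex: if their mvs $Q'\cap Q''$ were entirely contained in $Q$ then, by the induced subtree property, it would equal $Q\cap Q_i$ where $q_i$ is the $T$-neighbour of $q$ on that branch, so this $3$-set would label two distinct tree edges, violating the multiplicity-one condition of ur-chordal RANs. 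Consecutive cliques within a subtree therefore share a surviving vertex, so the vertices in the subtree's cliques not in $Q$ form a single connected component of $G-Q$. For vertices in different subtrees, any $G-Q$-path would translate into a $T$-path circumventing $q$, which is impossible since $q$ is the cut-node. Hence $|S|/\omega(G-S)=4/3$, giving $\tau(G)\le 4/3$.

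\emph{Lower bound.} Let $S$ be an arbitrary separator with $k:=\omega(G-S)\ge 2$. My plan is to establish (A) $k\le |SI|\le (n-2)/2$ and (B) $|S|\ge k+1$; together these give $|S|/k\ge 1+1/k \ge 1+2/(n-2)=n/(n-2)\ge (n+2)/n$. The bound $|SI|=2+n_3\le (n-2)/2$ follows from $n_2+2n_3=n-5$ and the defining $C_4$ condition $n_2\ge 1$. For the inequality $k\le |SI|$, I would use the bijection between components of $G-S$ and components of the "blue" subforest of $T$ (cliques not contained in $S$, tree edges whose mvs is not contained in $S$), and show that every such component reduces, by iteratively pruning subtree-leaves that are not $T$-leaves (whose surviving vertices are already absorbed into the $T_C$-neighbour because $2$ distinct mvs's cover a $4$-clique), to a single $T$-leaf clique $Q^\ast$; the unique simplicial vertex of $Q^\ast$ must then survive in that component, and distinct components contain distinct simplicial vertices because a simplicial vertex belongs to a unique maximal clique.

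\emph{Main obstacle.} The delicate step is (B), i.e., $\tau(G)>1$. Writing $r$ for the number of maximal cliques contained in $S$ and $r_e$ for the number of tree edges whose mvs lies in $S$, the blue-subforest bijection yields $k=1+r_e-r$, so the claim reduces to $|S|\ge r_e-r+2$. My plan is a double-counting argument over the red structure of $T$: the $r$ red cliques partition into $c_R$ red subtrees contributing $r+3c_R$ vertices to $S$ (since a subtree with $r_j$ cliques spans $r_j+3$ vertices in a $3$-tree), and each red edge between blue cliques contributes a distinct $3$-element mvs whose vertices further augment $S$. The principal difficulty is tracking the overlaps: a red BB-mvs may share vertices with the red-clique structure, and red subtrees may share vertices through blue paths in $T$. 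The $C_4$ restriction $\deg_T\le 3$ is decisive here, because it caps the density of incidences at each vertex and ensures that the counted contributions collectively cover $|S|$ enough times to close the inequality $|S|\ge r_e-r+2$; verifying this inclusion-exclusion carefully, especially when multiple red subtrees cluster around a common vertex, is the main technical hurdle.
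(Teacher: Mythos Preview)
Your upper-bound argument is correct and in fact more explicit than the paper's, which essentially takes $\tau(G)\le 4/3$ for granted after checking the single $n=8$ case.

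The lower bound, however, is where your plan and the paper diverge sharply, and where your proposal has a genuine gap. You yourself identify step~(B), namely $|S|\ge k+1$ for every separator $S$, as the ``main technical hurdle'', and your sketch of a red/blue double count does not close it: you do not show that the overlap corrections never destroy the inequality $|S|\ge r_e-r+2$, and the remark that ``the $C_4$ restriction $\deg_T\le 3$ \ldots\ ensures that the counted contributions collectively cover $|S|$ enough times'' is an assertion, not an argument. Until that inclusion--exclusion is actually carried out, the lower bound is unproven. Step~(A) is also shakier than it looks: the pruning argument assumes each blue component of $T$ can be collapsed to a genuine $T$-leaf, but a blue component could in principle consist entirely of cliques that are internal in $T$ (every $T$-leaf adjacent to it could have its mvs inside $S$ while the clique itself is not entirely in $S$); you would need to rule this out or argue more carefully that a simplicial vertex of $G$ survives in each component.

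The paper avoids all of this with a one-line reduction. Starting from the clique-tree $T$ of $G$, add a new leaf to every degree-$2$ vertex; the resulting tree $T'$ is $3$-regular, hence it is the clique-tree of some $G'\in C_2$ on $n+\ell$ vertices ($\ell\ge 1$), and Theorem~\ref{theo:toug-C2} gives $\tau(G')=\frac{n+\ell+1}{n+\ell-1}$. Since $G$ is obtained from $G'$ by deleting the $\ell$ new simplicial vertices, Lemma~\ref{lema:simp} yields $\tau(G)\ge\tau(G')\ge\frac{n+2}{n}$ (the minimum occurring at $\ell=1$). Thus the hard inequality $\tau(G)>1$ that you are trying to prove directly is obtained for free from the already-established $C_2$ case, with no combinatorics on separators of $G$ at all. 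If you want to repair your approach, the cleanest route is simply to adopt this embedding trick for the lower bound and keep your explicit degree-$3$ clique argument for the upper bound.
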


\begin{proof}  
If $n=8$, $\tau(G) = \frac{4}{3}$.
Otherwise,  consider the  clique-tree $T$ of $G$ and a tree $T'$ obtained from $T$ 
by the addition of  one leaf to every vertex of degree 2. 
So, $T'$ is a clique-tree of some graph $G'\in C_2$ on $n+\ell$ vertices, $\ell\geq 1$,  
and $ \tau(G')=\frac{n+\ell+1}{n+\ell -1}$, by Theorem \ref{theo:toug-C2}. 
For each new  leaf $q$ of $ T'$, there is a  maximal clique $Q$ in $G'$ with a new simplicial vertex. 
By Lemma \ref{lema:simp},   $ \tau(G) \geq   \tau(G')$. 
If $\ell=1$, $\frac{n+2}{n}$ $ \leq  \tau(G)$.
Furthermore, $n\geq9$ and $\frac{n+2}{n} \leq  \frac{4}{3}$. 
 \qed \\
\end{proof}

\begin{theorem}\label{theo:toug-C5}
Let $G\in C_5$. Then $\tau(G) = 1$.
\end{theorem}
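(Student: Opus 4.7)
The plan is to prove $\tau(G)\leq 1$ and $\tau(G)\geq 1$ separately: the upper bound by exhibiting a single separator of ratio~$1$, and the lower bound by constructing a Hamiltonian cycle and invoking Theorem~\ref{theo:Chavtal}.

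For the upper bound, let $q^*$ be the unique vertex of $T$ of degree~$4$ and $Q^*=\{a,b,c,d\}$ its corresponding maximal clique. I take $S=Q^*$ and show $\omega(G-Q^*)=4$. Let $q_1,\ldots,q_4$ be the neighbors of $q^*$ in $T$, let $T_i$ be the connected component of $T-q^*$ containing $q_i$, and let $V_i$ be the set of vertices appearing in some clique of $T_i$ but not in $Q^*$. Each $V_i$ is non-empty because $Q_i\setminus Q^*$ contributes one vertex. No edge of $G-Q^*$ joins two different branches: a common maximal clique would, by the induced subtree property, lie on the path in $T$ between $T_i$ and $T_j$, which passes through $q^*$, and every clique on such a path contains a vertex of $Q^*$. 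Each $G[V_i]$ is connected: for any edge $\{Q,Q'\}$ of $T_i$, the mvs $Q\cap Q'$ is not contained in $Q^*$, because the four $3$-subsets of $Q^*$ are already used as mvs's on the edges of $T$ incident to $q^*$ and Theorem~\ref{theo:caract-urchor} forbids repeated mvs's; hence consecutive cliques of $T_i$ share a vertex outside $Q^*$, keeping $V_i$ connected in $G-Q^*$. Therefore $\omega(G-Q^*)=4$ and $|S|/\omega(G-S)=1$.

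For the lower bound I construct a Hamiltonian cycle in $G$. Writing $S_i=Q^*\setminus\{x_i\}$ for the base of branch $B_i$, the $x_i$ form a permutation of $a,b,c,d$. Fix the cyclic order $a\to b\to c\to d\to a$ on $Q^*$ and assign each branch $B_i$ to a consecutive edge $(u,v)$ of the cycle with $x_i\notin\{u,v\}$; explicitly, $x_i=a\mapsto(c,d)$, $b\mapsto(d,a)$, $c\mapsto(a,b)$, $d\mapsto(b,c)$ is a perfect matching. For the branch $B_i$ assigned to $(u,v)$, build a Hamiltonian path of $G[V_i\cup\{u,v\}]$ from $u$ to $v$; concatenating the four paths produces a Hamiltonian cycle of $G$ visiting $a,b,c,d$ in the prescribed cyclic order, so Theorem~\ref{theo:Chavtal} yields $\tau(G)\geq 1$.

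The main obstacle is producing the required Hamiltonian $u$-$v$ path inside each branch: it must cover $V_i$ and avoid the remaining vertex of $S_i$. Because $q^*\notin T_i$, the clique tree $T_i$ has maximum degree~$3$, so $B_i$ is a structurally simpler RAN with base $S_i$; the path can be built by induction on $|V_i|$, peeling off a simplicial leaf of $T_i$ and reinserting the associated simplicial vertex between two of its neighbors already on the path. Verifying that this insertion succeeds for every possible placement of the omitted base vertex $z\in S_i\setminus\{u,v\}$ in $T_i$ is the technical heart of the argument.
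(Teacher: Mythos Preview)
Your upper-bound argument is correct and is essentially the paper's, with more detail on why $G-Q^*$ has exactly four components.

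For the lower bound, your strategy (build a Hamiltonian cycle, invoke Theorem~\ref{theo:Chavtal}) and the paper's coincide, but the constructions differ. You split $G$ into four branches and try to produce in each a Hamiltonian $u$--$v$ path avoiding the third base vertex~$z$; the paper instead builds a single spanning maximal outerplanar subgraph of $G$ directly, growing the clique-tree outward from the degree-$4$ vertex and maintaining the invariant that every current leaf's simplicial vertex sits on the cycle between two of its three clique-mates. When a new leaf $q_j$ is attached to a current leaf $q_i$, its new vertex $w$ necessarily has the old simplicial vertex $v$ of $Q_i$ among its clique-mates (the mvs $Q_i\cap Q_j$ cannot repeat the one already used between $q_i$ and its parent, by Theorem~\ref{theo:caract-urchor}), so $w$ is adjacent to $v$ and to at least one cycle-neighbour of $v$, and can be inserted on one of the two cycle-edges at $v$; two simultaneous attachments use the two different edges. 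This invariant is precisely what makes the case analysis close.

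Your per-branch induction, by contrast, peels a leaf and then tries to reinsert its simplicial vertex $w$ into an \emph{arbitrary} Hamiltonian $u$--$v$ path of the smaller branch supplied by the inductive hypothesis. That hypothesis is too weak: if $z$ is one of $w$'s three clique-mates then $w$ has only two neighbours on the path and nothing guarantees they are consecutive; even with three neighbours on the path nothing forces two of them to be adjacent on an arbitrarily chosen path. You acknowledge this as the ``technical heart'' but do not carry it out, so the proof as written has a genuine gap. To make backward peeling work you would have to strengthen the inductive hypothesis to something like ``there is a Hamiltonian $u$--$v$ path on which each current leaf's simplicial vertex lies between two of its clique-mates,'' at which point the argument becomes the paper's forward construction read in reverse.
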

\begin{proof}
Let $T=(V_T, E_T)$ be the clique-tree of $G$.
There is one vertex $q\in V_T$ such that $d(q)=4$.
So, the removal of the vertices of the clique $Q$ from $G$ entails four remaining connected components.
Then  $\tau(G) \leq 1$.

We are going to prove that $G$ has a Hamiltonian cycle, showing that,
as view in Theorem \ref{theo:ktreehamileq}, it contains a $1$-tough spanning $2$-tree, i.e., a maximal
outerplanar graph (a {\em mop}).
Equivalently a {\em mop} is a SC $2$-tree.
In order to obtain this result we are going to rebuild $T$.

Let us consider a subtree $T'$ of $T$ (associated  with a subgraph $G'$ of $G$)  
containing  vertex $q$, being $Q=\{a,b,c,d\}$, and its four adjacent vertices.
Graph $G'$ has four simplicial vertices $v_1, v_2, v_3$ and $v_4$.
It is immediate that there is a \emph{mop} $M$ that is a spanning subgraph of $G'$;
without loss of generality, a Hamiltonian cycle of $G'$  is $\langle a,v_1,b,v_2,c, v_3, d,v_4,a \rangle$.
So, each simplicial vertex is adjacent to two non-simplicial  ones.

The remaining of the clique-tree $T$ will be built with some restrictions.
The addition of new vertices to $T'$ 
(corresponding to  maximal cliques of $G$, each one with one new vertex of $G'$)
will be performed only on leaves of the tree $T'$.
In each iteration one or two new vertices will be added to $T'$, since 
each leaf $q_i$ of $T'$ can have one or two new adjacent vertices, by the definition of $C_5$.

\medskip

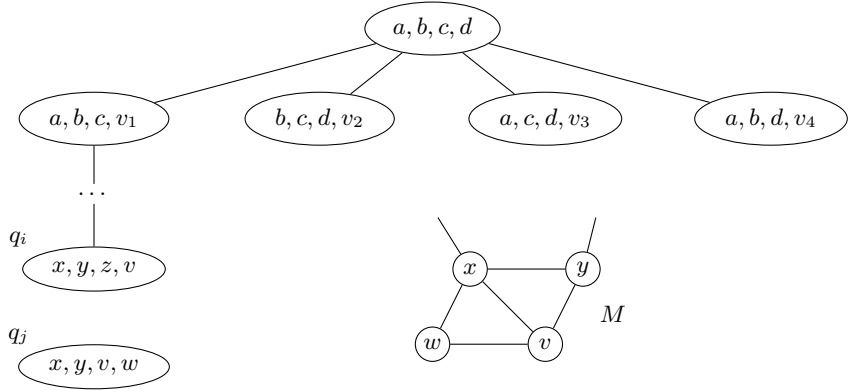
\begin{figure}[h]
\centering
{\small
\begin{tikzpicture} 
 \tikzstyle{block}= [ellipse,draw,minimum width=40pt]
  \tikzstyle{vert}=[draw,fill=white,circle,    inner sep=0pt, minimum width=13pt,align=center]
    \node [block] (0)  at (4.5,1.2) {$a,b,c,d$};
    \node[block] (1)     at (0,0) {$a,b,c,v_1$};
     \node[block] (2)     at (3,0) {$b,c,d,v_2$};
     \node[block] (3)     at (6,0) {$a,c,d,v_3$};
   \node[block] (4)     at (9,0) {$a,b,d,v_4$};
    \node (x)  at (0,-1) [draw=none,fill=none] {$\dots$};
     \node[block] (5)  at (0,-2) {$x,y,z,v$};
   \node[block] (6)  at (0,-3.3)  {$x,y,v,w$};
   \node  at (-1,-1.6) [draw=none,fill=none] {$q_i$};     
    \node  at (-1,-2.9) [draw=none,fill=none] {$q_j$}; 
    \draw (0)--(1);  \draw (0)--(2);   \draw (0)--(3);   \draw (0)--(4);  \draw (1)--(x);  \draw (x)--(5);  

 \node (a0)  at (4.5,-1.2) [draw=none,fill=none] {$$};
  \node (b0)  at (6.7,-1.2) [draw=none,fill=none] {$$};
  \node[vert] (A)  at (5,-2) {$x$}; 
  \node[vert] (B)  at (6.5,-2) {$y$};  
    \node[vert] (C)  at (4.5,-3) {$w$}; 
     \node[vert] (D)  at (6,-3) {$v$};
 \draw (a0)--(A) --(D) --(B) --(A); 
  \draw (b0)--(B); 
 \draw (A) -- (C) -- (D); 
   \node  at (6.9,-2.6) [draw=none,fill=none] {$M$};     

  \end{tikzpicture}
}
\caption{Case  1 of Theorem \ref{theo:toug-C5}} 
\label{fig:teoc5a}
\end{figure}

 Without loss of generality, see Figure \ref{fig:teoc5a}. 
 Leaf $q_i$  has one simplicial vertex $v$,  that is adjacent in the \emph{mop} $M$ to vertices $x$ and $y$.
If only one vertex $q_j$ is added to $T'$, the addition of the vertex 
$w \in Q_j= \{x,y,v,w\}$  to the \emph{mop} $M$ is immediate.
It is adjacent to $v$ and $x$ or $v$ and $y$ in $M$ since vertex $v$ is mandatory in the \emph{mop} by   definition.

If  two adjacent vertices $q_j$ and $q_k$ are added to $T'$ (see Figure \ref{fig:teoc5b}), they must be analyzed together.
Let us suppose the following situation:

\begin{itemize}
  \item leaf $q_i$ of $T'$ corresponds to clique $Q_i=\{x, y, z, v\}$; vertex $v$ is adjacent to $x$ and $y$ in $M$;
  \item adjacent vertices to be added: cliques $q_j$ (new vertex $w \in Q_j$) and $q_k$ (new vertex $t\in Q_k$).
\end{itemize}

Observe that $Q_i \cap Q_j$ can be $\{x, y, v\}$, $\{ x, z, v\} $ or
$\{ y, z, v\}$; $Q_i \cap Q_k$ have the same number of  choices but they are different. 
Hence it is always possible to add vertices $w$ and $t$ to the \emph{mop} $M$;
two leaves are considered in the clique-tree $T'$ and vertex $q_i$ is no long a leaf.

In both cases, graph $G$ has a Hamiltonian cycle  and $\tau(G)=1$.
\qed       

\medskip

\begin{figure}[h]
\centering
{\small
\begin{tikzpicture} 
 \tikzstyle{block}= [ellipse,draw,minimum width=40pt]
  \tikzstyle{vert}=[draw,fill=white,circle,    inner sep=0pt, minimum width=13pt,align=center]
    \node [block] (0)  at (4.5,1.2) {$a,b,c,d$};
    \node[block] (1)     at (0,0) {$a,b,c,v_1$};
     \node[block] (2)     at (3,0) {$b,c,d,v_2$};
       \node[block] (3)     at (6,0) {$a,c,d,v_3$};
   \node[block] (4)     at (9,0) {$a,b,d,v_4$};
    \node (x)  at (0,-1) [draw=none,fill=none] {$\dots$};
      \node[block] (5)  at (0,-2) {$x,y,z,v$};
      \node[block] (6)  at (-0.8,-3)  {$x,z,v,w$};
      
        \node[block] (7)  at (1.2,-3)  {$y,z,v,t$};
      \node  at (-1,-1.6) [draw=none,fill=none] {$q_i$};     
      \node  at (-1.7,-2.6) [draw=none,fill=none] {$q_j$}; 
       \node  at (2.1,-2.6) [draw=none,fill=none] {$q_k$}; 
       \draw (0)--(1);  \draw (0)--(2);   \draw (0)--(3);   \draw (0)--(4);  \draw (1)--(x);  \draw (x)--(5); 

 \node (a0)  at (4.5,-1.2) [draw=none,fill=none] {$$};
  \node (b0)  at (6.7,-1.2) [draw=none,fill=none] {$$};
  \node[vert] (A)  at (5,-2) {$x$}; 
  \node[vert] (B)  at (6.5,-2) {$y$};  
    \node[vert] (C)  at (4.5,-3) {$w$}; 
     \node[vert] (D)  at (6,-3) {$v$}; 
       \node[vert] (E)  at (7.5,-3) {$t$};
 \draw (a0)--(A) --(D) --(B) --(A); 
  \draw (b0)--(B); 
 \draw (A) -- (C) -- (D) --(E) --(B); 
   \node  at (7.5,-2.3) [draw=none,fill=none] {$M$};     
 \end{tikzpicture}
}
\caption{Case  2 of Theorem \ref{theo:toug-C5}} 
\label{fig:teoc5b}
\end{figure}
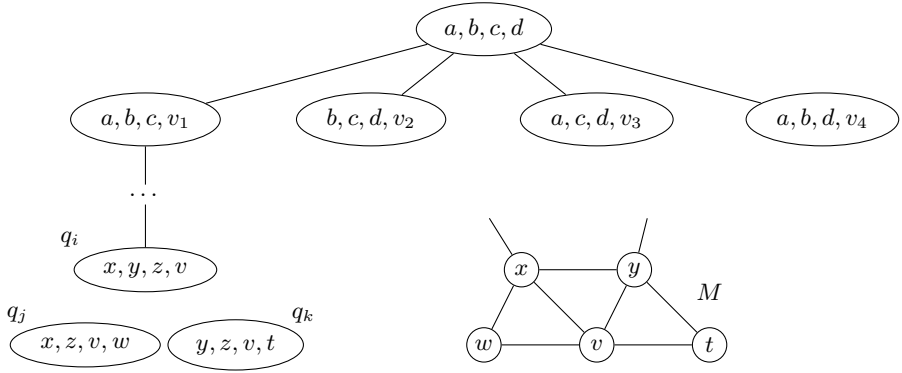

\end{proof}
The proofs of Theorems \ref{theo:toug-C6} and \ref{theo:toug-C7} rely on the fact that if the toughness of
an induced subgraph is less than 1 then the toughness of the graph is less than 1.

\begin{theorem}\label{theo:toug-C6}
Let $G\in C_6$. Then $\tau(G) < 1$.
\end{theorem}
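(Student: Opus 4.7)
My plan is to exhibit an explicit separator $S \subseteq V$ with $|S| < \omega(G - S)$, which immediately forces $\tau(G) < 1$. Because $G \in C_6$, the clique-tree $T$ contains at least one fat path $P_{i,j} = \langle q_i, q_1, \ldots, q_p, q_j \rangle$ with $p \geq 0$, where $q_i$ and $q_j$ have degree $4$ in $T$ and all internal vertices (when $p \geq 1$) have degree $3$. I would set $S$ to be the union of the maximal cliques along this spine, $S = Q_i \cup Q_1 \cup \cdots \cup Q_p \cup Q_j$.

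The size of $S$ is easy to tally: since $G$ is a $3$-tree, consecutive cliques on the spine share a minimal vertex separator of size $3$, so each new clique contributes exactly one new vertex. The induced subtree property (ISP) prevents such a new vertex from reappearing in an earlier spine clique, giving $|S| = 4 + (p+1) = p + 5$.

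To count components of $G - S$, I would use the degree hypotheses: $q_i$ and $q_j$ each have $3$ off-spine neighbors, and each internal $q_k$ has exactly one, producing $p + 6$ subtrees $T_1, \ldots, T_{p+6}$ hanging off the spine. For the $l$-th such subtree, with vertex set $V_l$ and size-$3$ attachment separator $S_l$, a single application of the ISP yields $V_l \cap S = S_l$, so $V_l \setminus S = V_l \setminus S_l$. Unique representability (Theorem \ref{theo:caract-urchor}) then gives $\mu(S_l) = 1$, so removing $S_l$ from $G$ produces exactly two connected components, one of which is $G[V_l \setminus S_l]$; this piece is therefore connected. A second ISP argument shows that distinct hanging subtrees satisfy $V_l \cap V_{l'} \subseteq S$, so no edge of $G$ can bridge $V_l \setminus S$ and $V_{l'} \setminus S$ in $G - S$. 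Hence $\omega(G - S) = p + 6$ and
\[
\tau(G) \;\leq\; \frac{|S|}{\omega(G-S)} \;=\; \frac{p+5}{p+6} \;<\; 1.
\]

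The step I expect to be the main obstacle is the connectivity claim for each piece $G[V_l \setminus S_l]$; my plan is to discharge it entirely through unique representability, which pins each $S_l$ to a single edge of $T$ and thereby splits $G - S_l$ into exactly two connected parts, one being the desired piece.
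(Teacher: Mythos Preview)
Your proof is correct and follows the same line as the paper: the same separator $S = Q_i \cup Q_1 \cup \cdots \cup Q_p \cup Q_j$, the same size count $|S| = p+5$, and the same component count $p+6$, yielding $\tau(G) \le (p+5)/(p+6) < 1$. The only difference is that the paper routes the argument through the auxiliary induced subgraph $G'$ represented by the spine together with its immediate neighbours in $T$ and then invokes the remark that ``toughness of an induced subgraph $<1$ forces toughness $<1$,'' whereas you work directly in $G$ and justify the component count via the induced-subtree property and $\mu(S_l)=1$---a slightly more careful execution of the same idea.
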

\begin{proof}
In order to prove this result it is sufficient to show a separator of cardinality $c$ whose removal of the graph 
produces  at least $c+1$ components.

Let $q_i$ and $q_j$ be vertices of the clique-tree $T$ of $G$  with degree $4$.
Consider the neat path $P_{i,j}= \langle q_i, q_1,  \dots, q_p, q_j \rangle$. 
If $q_i$ and $q_j$ are adjacent it is immediate that 
$${{|Q_i \cup Q_j|}\over{\omega(G - (Q_i \cup Q_j))}} = {{5}\over{6}} <1.$$

Consider now $T'$  the subtree of $T$   composed by
the neat path $\langle q_i, q_1, \ldots q_p,q_j \rangle$, $p \geq 1$, and all their adjacent vertices  in $T$.
By definition,  $d(q_1)=3, \ldots, d(q_p)=3$.
Hence, tree $T'$
has $p+2$ internal vertices and $p+6$ leaves.
Let   $G'$ be the subgraph of $G$ represented by $T'$
and  $S=Q_i\cup Q_1 \cup \ldots \cup Q_p \cup Q_j$.
Observe that $|Q_1 \setminus Q_i | = 1$, $\ldots$, $|Q_j \setminus Q_p | = 1$.
Then $S$ has $4+p+1$ elements.
Each leaf of $T'$ corresponds to a maximal clique of $G'$ that has a simplicial vertex in $G'$.
When removing $S$ of $G'$ these simplicial vertices become components.
There are $3+p+3$ leaves.
So, 
$\tau(G') = \frac{p+5}{p+6}$ and $\tau(G) \leq \tau(G') <1$.
\qed
\end{proof}

\begin{theorem}\label{theo:toug-C7}
Let $G\in C_7$.
Then  $\tau(G) < 1$.
\end{theorem}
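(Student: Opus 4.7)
The plan is to parallel the proof of Theorem~\ref{theo:toug-C6}: produce a separator $S \subseteq V$ with $|S| < \omega(G-S)$, so that $\tau(G) \leq |S|/\omega(G-S) < 1$. The natural candidate is $S := Q_i \cup Q_j$, where $P_{i,j}$ is the neat path witnessing $G \in C_7$; then $|S| = 8 - |Q_i \cap Q_j|$, i.e., $6$, $7$, or $8$ in Cases 1, 2, 3 of the definition, so what I need is $\omega(G - S) \geq 7, 8, 9$ respectively.

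First I would harvest six pairwise-disjoint components of $G - S$ from the off-path neighbours of $q_i$ and $q_j$ in $T$: since $d(q_i) = d(q_j) = 4$, each of these vertices has three off-path neighbours, and the induced subtree property (ISP) forces the ``fourth'' vertex of every such neighbouring clique to lie outside $Q_i \cup Q_j$, so each branch contains at least one vertex of $V \setminus S$; by the tree structure of $T$ the six resulting subtrees are disjoint. Second, because $p \geq 2$ in every sub-case, the internal path cliques $Q_1, \ldots, Q_p$ must collectively introduce at least one ``extra'' vertex $v \notin Q_i \cup Q_j$ (the $p+1$ vertex-swaps along $P_{i,j}$ exceed the $4 - |Q_i \cap Q_j|$ forced swaps by at least one), and by the ISP no extra can lie in a branch off $q_i$ or $q_j$, so the extras supply a seventh component disjoint from the first six. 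This already settles \textbf{Case 1}, giving $\tau(G) \leq 6/7 < 1$.

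For \textbf{Cases 2 and 3}, I would invoke the auxiliary clique $Q_k$ (resp.\ $Q_k$ and $Q_\ell$) with $Q_k \subset Q_i \cup Q_j$ and $d(q_k) = 3$. Since $Q_k \subseteq S$, the vertex $q_k$ is removed together with $S$; by the same ISP argument each of its three $T$-branches carries a non-$S$ vertex. If $q_k$ lies on $P_{i,j}$, only its single off-path branch is genuinely new -- the other two continue the main path and their non-$S$ vertices are extras already counted -- contributing one more component; if $q_k$ lies off $P_{i,j}$, its two ``further off-path'' branches are both new, contributing at least two. In either case $Q_k$ supplies at least one extra component, and in Case 3 so does $Q_\ell$, yielding $\omega(G-S) \geq 8$ and $\omega(G-S) \geq 9$ respectively, hence $\tau(G) \leq 7/8 < 1$ and $\tau(G) \leq 8/9 < 1$.

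The main obstacle I anticipate is certifying disjointness of all the components identified above. The cleanest route will be to observe that two vertices of $V \setminus S$ lie in the same component of $G - S$ iff the unique $T$-path joining their respective ${\mathbb Q}(\cdot)$-subtrees avoids every clique contained in $S$; the tree structure, together with the ISP and the fact that $Q_i, Q_j$, and $Q_k, Q_\ell$ are all contained in $S$, then forces the identified component sources to live in distinct ``$S$-free'' regions of $T$. Carrying this out sub-case by sub-case, and confirming that each purported region really contributes a vertex of $V \setminus S$, is the technical heart of the proof.
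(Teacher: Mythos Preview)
Your proposal is correct and follows essentially the same strategy as the paper: take $S = Q_i \cup Q_j$ and show $\omega(G-S) > |S|$ by counting the six branches hanging off $q_i$ and $q_j$, one further component coming from the interior of the slim path, and (in Cases 2 and 3) an additional branch off $q_k$ (respectively $q_k$ and $q_\ell$). You are more explicit than the paper about why these pieces are pairwise disjoint and non-empty via the induced subtree property, and you treat the location of $q_k$ relative to $P_{i,j}$ with more care, but the core argument is identical.
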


\begin{proof}
In order to prove that $\tau(G)< 1$ it is sufficient to show a separator of cardinality $c$ whose removal of the graph 
produces  at least $c+1$ components.

Let $q_i$ and $q_j$ be vertices of the clique-tree $T$ of $G$  with degree $4$.
Consider the neat path $P_{i,j}= \langle q_i, q_1,  \dots, q_p, q_j \rangle$. 
Consider $T'$  the subtree of $T$   composed by the neat path 
$P_{i,j}$ and the adjacent vertices of the vertices of the path.
As $G \in C_7$, the path $P= \langle q_1,  \dots, q_p \rangle$ is a slim path.

Three cases must be considered:
\begin{enumerate}
  \item  $|Q_i \cap Q_j|=2$ with $p \geq 2$.
 
The tree $T'$ has at least $6$ leaves: $3$ are adjacent to $q_i$ and $3$ are adjacent to $q_j$.
Let   $G'$ be the subgraph of $G$ represented by $T'$
and  $S=Q_i\cup  Q_j$; $|S| = 6$.
After the removal of $S$, it remains in $G'$ at least six components that are 
the simplicial vertices of the maximal cliques that correspond to 
 the leaves of $T'$  and 
one more establish by 
$(Q_1 \cup \ldots \cup Q_p)\setminus S$.
As $p\geq 2$,  this last component has also at least one vertex.
So, $\tau(G) < 1$.

  \item $|Q_i \cap Q_j|=  1$ with $p \geq 3$ and $G$ contains at least one maximal clique $Q_k$ 
such that $(Q_i \cup Q_j) \supset Q_k$, $d(q_k) =3$.

The proof is analogous to the proof of case 1. 
The tree $T'$ has at least $7$ leaves: $3$ are adjacent to $q_i$,
$3$ are adjacent to $q_j$ and one is adjacent to $q_k$.
Let   $G'$ be the subgraph of $G$ represented by $T'$
and  $S=Q_i\cup  Q_j$; $|S| = 7$.
After the removal of $S$, it remains in $G'$ at least  seven components that are 
the simplicial vertices of the maximal cliques that correspond to 
 the leaves of $T'$  and 
one more establish by 
$(Q_1 \cup \ldots \cup Q_p)\setminus S$.
As $p\geq 3$, this last component has also at least one vertex.
So, $\tau(G) < 1$.

  \item $|Q_i \cap Q_j|= 0$ with $p \geq 4$ and $G$ contains at least two maximal cliques $Q_k$ and $Q_\ell$ 
such that $(Q_i \cup Q_j) \supset Q_k$, $(Q_i \cup Q_j) \supset Q_\ell $, $d(q_k) =d(q_\ell) = 3$.

The reasoning is similar to the case 2.
\qed
\end{enumerate}
\end{proof}

Figure  \ref{fig:teoc7} presents an illustration of case 1 of Theorem  \ref{theo:toug-C7} with $n\geq 13$.
Consider $S=\{a,b,c,d,x,y\}$.  It is immediate to see that there 
are the following connected components in $G - S$:
$\{\ldots, 1\}$, $\{\ldots, 2\}$, $\{\ldots, 3\}$, $\{\ldots, e\}$,
$\{\ldots, 4\}$, $\{\ldots, 5\}$ and $\{\ldots,6\}$.

\medskip

\begin{figure}[h!]
\centering
{\small

\begin{tikzpicture}  [scale=.9]

\tikzstyle{block}= [ellipse,draw,minimum width=40pt]
\node[block] (1a)    at (8.5,0) {$a, b, c, d$};
\node[block] (2a)    at (3.5,-1.5) {$a, b, c, e$};
\node[block] (2b)    at (6,-1.5) {$a, b,d, 1$};
\node[block] (2c)    at (8.5,-1.5) {$a, c, d, 2$};
\node[block] (2d)    at (11,-1.5) {$b, c, d, 3$};
\node[block] (3a)    at (1.8,-3) {$a,b,....$};
\node[block] (4a)    at (0,-4.5) {$a, b, x, y$};
\node[block] (5a)    at (0,-6) {$a, b, y, 4$};
\node[block] (5b)    at (2.4,-6) {$a, x,y, 5$};
\node[block] (5c)    at (4.9,-6) {$b, x, y, 6$};
\foreach \from/\to in {1a/2a,1a/2b,1a/2c,1a/2d,
4a/5a, 4a/5b, 4a/5c}
\draw (\from) -- (\to);
\foreach \from/\to in {2a/3a, 3a/4a}
\draw[dashed](\from) -- (\to);
\end{tikzpicture}
}
\caption{Clique-tree of case 1   of Theorem \ref{theo:toug-C7}} 
\label{fig:teoc7}
\end{figure}
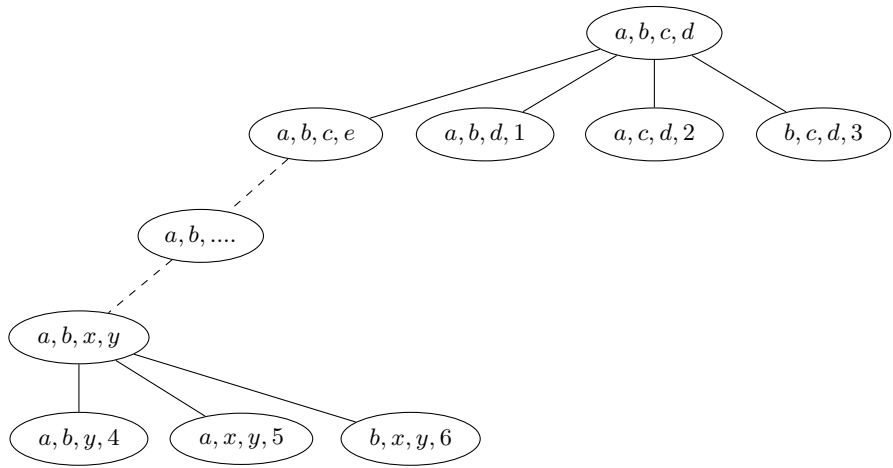


\section{Main results -- hamiltonicity}

In this section,  results on the hamiltonicity of the subclasses defined in Section 5 are presented. 

\begin{theorem}
Let $G$ be a RAN that belongs to $C_0, C_1$ on 8  vertices, $C_2,C_3, C_4$ or $ C_5$.
Then $G$ is Hamiltonian.
\end{theorem}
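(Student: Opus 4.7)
The plan is to split cases according to the toughness values already established in Section 6. Every \emph{RAN} is by construction a planar $3$-tree, hence a planar chordal graph, so Theorem \ref{theo:Bohme} reduces hamiltonicity to a toughness check: any such graph with $\tau(G) > 1$ is Hamiltonian. The bulk of the argument is therefore just reading off which subclasses satisfy this strict inequality; the main obstacle is the two boundary subclasses in which $\tau(G) = 1$ exactly and Theorem \ref{theo:Bohme} is out of reach.

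For $G \in C_0$, Theorem \ref{theo:toug-C0} gives $\tau(G) = 3/2 > 1$. For $G \in C_2$, Theorem \ref{theo:toug-C2} gives $\tau(G) = (n+1)/(n-1) > 1$. For $G \in C_3$, Theorem \ref{theo:toug-C3} yields $\tau(G) \geq n/(n-2)$ or $\tau(G) \geq (n+1)/(n-1)$ according to the parity of $n$, and both quantities exceed $1$ for $n \geq 6$. For $G \in C_4$, Theorem \ref{theo:toug-C4} gives $\tau(G) \geq (n+2)/n > 1$. In each of these four subclasses Theorem \ref{theo:Bohme} applies directly and $G$ is Hamiltonian.

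The remaining two subclasses are handled by reusing constructions from Section 6. For $G \in C_5$ the proof of Theorem \ref{theo:toug-C5} already builds a $1$-tough spanning maximal outerplanar graph of $G$ and invokes Theorem \ref{theo:ktreehamileq}, so hamiltonicity in $C_5$ is a byproduct of the toughness proof. For $G \in C_1$ with $n = 8$, Theorem \ref{theo:toug-C1} gives $\tau(G) = 1$ exactly; here the clique-tree is the unique $4$-regular tree on $5$ nodes, so $G$ is determined up to isomorphism as the central tetrahedron $\{a,b,c,d\}$ together with one simplicial vertex in each of its four triangular faces. This is precisely the graph $G'$ that serves as the base of the construction in the proof of Theorem \ref{theo:toug-C5}, where the Hamiltonian cycle $\langle a, v_1, b, v_2, c, v_3, d, v_4, a \rangle$ is already exhibited. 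Hence this case reduces to that explicit cycle, finishing the argument.
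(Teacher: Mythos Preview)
Your proof is correct and follows essentially the same route as the paper: apply Theorem~\ref{theo:Bohme} to $C_0, C_2, C_3, C_4$ via the toughness bounds, handle $C_5$ by reusing the spanning-\emph{mop} construction from the proof of Theorem~\ref{theo:toug-C5}, and treat $C_1$ on $8$ vertices separately since $\tau(G)=1$. The only cosmetic difference is that where the paper writes ``by inspection'' for the $8$-vertex $C_1$ graph, you point out that this graph coincides with the base subgraph $G'$ in the proof of Theorem~\ref{theo:toug-C5}, so the explicit Hamiltonian cycle $\langle a, v_1, b, v_2, c, v_3, d, v_4, a\rangle$ given there already settles it.
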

\begin{proof}
Let be $G \in C_0 \cup C_2 \cup C_3 \cup C_4$.
By Theorems \ref{theo:toug-C0}, \ref{theo:toug-C2}, \ref{theo:toug-C3} and \ref{theo:toug-C4}, $\tau (G) >1$ and, 
by Theorem \ref{theo:Bohme}, $G$  is Hamiltonian.

By Theorem \ref{theo:toug-C1}, $G\in C_1$ on 8  vertices is such that $\tau (G) =1$. By inspection, $G$ is Hamiltonian.

The proof of Theorem \ref{theo:toug-C5} builds a spanning \emph{mop}
of the graph $G\in C_5$; so, by Theorem \ref{theo:ktreehamileq}, $G$ is also Hamiltonian. 
\qed
\end{proof}

\begin{theorem}
Let $G$ be a RAN that belongs to $C_1$ with $n\geq 11$ vertices, $C_6$ or $C_7$.
Then $G$ is non-Hamiltonian.
\end{theorem}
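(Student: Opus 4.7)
The plan is to deduce non-Hamiltonicity from the toughness bounds already established, via the contrapositive of Chvátal's Theorem \ref{theo:Chavtal}: if $\tau(G) < 1$, then $G$ is non-Hamiltonian. So the entire proof reduces to confirming that $\tau(G) < 1$ holds in each of the three cases.

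For $G \in C_6$ and $G \in C_7$, the work has already been done: Theorems \ref{theo:toug-C6} and \ref{theo:toug-C7} explicitly state $\tau(G) < 1$, so the contrapositive of Theorem \ref{theo:Chavtal} yields the conclusion immediately. Each of these two cases is therefore a one-line invocation.

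For $G \in C_1$ with $n \geq 11$, I would apply Theorem \ref{theo:toug-C1}, which gives $\tau(G) = \frac{n+4}{2n-4}$. The key elementary calculation is
\[
\frac{n+4}{2n-4} < 1 \iff n+4 < 2n-4 \iff n > 8.
\]
Since $C_1$ contains exactly the RANs with $n = 8 + 3\ell$, $\ell \geq 0$, the first admissible size strictly greater than $8$ is $n = 11$ (the case $\ell = 1$). Thus for every $G \in C_1$ with $n \geq 11$ we have $\tau(G) < 1$, and Theorem \ref{theo:Chavtal} again finishes the argument.

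There is no real obstacle here; the theorem is essentially a corollary of the toughness results of Section~6 together with Chvátal's necessary condition. The only mild subtlety is noticing that $C_1$ skips the values $n = 9, 10$ so the hypothesis $n \geq 11$ is the right threshold to force $\tau(G) < 1$, matching precisely the complement of the $n = 8$ case treated in the previous theorem.
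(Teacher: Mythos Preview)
Your proof is correct and follows exactly the same approach as the paper: invoke Theorems \ref{theo:toug-C1}, \ref{theo:toug-C6}, \ref{theo:toug-C7} to get $\tau(G)<1$, then apply the contrapositive of Theorem \ref{theo:Chavtal}. You simply make explicit the elementary inequality $\frac{n+4}{2n-4}<1 \iff n>8$ for the $C_1$ case, which the paper leaves implicit.
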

\begin{proof}
Let be $G \in C_1$ with $n\geq 11$ vertices, $C_6$ or $C_7$.
By Theorems \ref{theo:toug-C1}, \ref{theo:toug-C6} and \ref{theo:toug-C7}, $\tau (G) < 1$. 
Then, by Theorem \ref{theo:Chavtal}, $G$  is non-Hamiltonian. \qed
\end{proof}


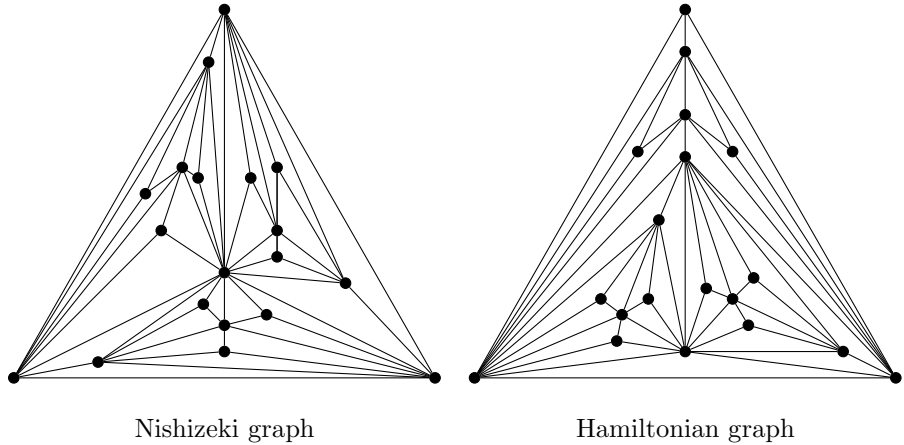
\begin{figure}[!h]
\tikzset{
    graphnode/.style={
      draw,fill,
      circle,
      minimum size=1.4mm,
      inner sep=0
    }
}
\begin{minipage}{6.5cm}
\begin{center}
\begin{tikzpicture}[every node/.style={graphnode},scale=.7]
  \node (a)at (4,7) {};
  \node (b) at (0,0) {};
  \node (c) at (8,0) {};
   \node (d) at (4,2) {};
   \node (1) at (3.7,6) {};
  \node (2) at (3.2,4) {};
  \node(3) at (2.5,3.5) {};
 \node(4) at (3.5,3.8) {};
  \node(5) at (2.8,2.8) {};
   \node(8) at (4.5,3.8) {};
   \node (9) at (5,4) {};
  \node(7) at (5,2.8) {};
 \node(6) at (6.3,1.8) {};
  \node(10) at (5,2.3) {};
\node(11) at (1.6,0.3) {};
   \node (12) at (4,1) {};
 \node (13) at (4.8,1.2) {};
 \node(15) at (3.6,1.4) {};
   \node (14) at (4,0.5) {};
\foreach \from/\to in {a/b,a/c, a/d,b/d,c/d,b/c,1/a,1/b,1/d,1/2,2/b,2/d,4/1,4/2,4/d,3/1,3/b,3/2,5/b,5/2,5/d,8/a,8/d,8/7,7/a,7/d,9/a,9/7,6/a,6/9,6/7,6/c,6/d,10/7,10/9,10/6,10/d,11/b,11/c,11/d,15/11,15/d,12/d,12/15,12/11,12/c,13/d,13/12/,13/c,14/12,14/11,14/c}
  \draw (\from) -- (\to);
  \node at (4,-1) [draw=none,fill=none] {Nishizeki graph};
\end{tikzpicture}
\end{center}
\end{minipage}\hfill
\begin{minipage}{6.5cm}
\vskip.25cm
\begin{tikzpicture}[every node/.style={graphnode},scale=.7]
  \node (b) at (0,0) {};
  \node (c) at (8,0) {};
   \node (1) at (4,0.5) {};
   \node (h) at (4,4.2) {};
   \node (d) at (4,5) {};
   \node (a) at (4,6.2) {};
   \node (e) at (4,7) {};
    \node (f) at (3.1,4.3) {};
   \node (g) at (4.9,4.3) {};
   \node (2) at (3.5,3) {};
   \node (3) at (2.8,1.2) {};
        \node (y) at (2.4,1.5) {}; \node (z) at (3.3,1.5) {}; \node (x) at (2.7,0.7) {};
   \node (4) at (7,0.5) {};   
   \node (5) at (4.9,1.5) {};     
          \node (v) at (4.4,1.7) {}; \node (w) at (5.3,1.9) {};  \node (t) at (5.2,1) {}; 
\foreach \from/\to in {a/b,a/c,a/d,a/e,b/c,b/d,b/1,c/d,c/1,b/e,c/e,b/h,c/h,1/h,d/h,a/f,d/f,b/f,a/g,d/g,c/g,b/2,h/2,1/2,2/3,b/3,1/3,b/y,2/y,3/y,2/z,3/z,1/z,b/x,3/x,1/x,c/4,1/4,h/4,h/5,1/5,4/5,h/v,1/v,5/v,h/w,5/w,4/w,5/t,1/t,4/t} 
  \draw (\from) -- (\to);
  \node at (4,-1) [draw=none,fill=none] {Hamiltonian graph};
\end{tikzpicture}
\end{minipage}
\vskip-0,8cm
\caption{Graphs belonging to $C_8$} 
\label{fig:c8}
\end{figure}

%

\section{Conclusions}

We have established a partition of the class
of \emph{RAN}s in 8 subclasses and we were able to develop strong results in relation 
to toughness and hamiltonicity for subclasses $C_1$ to $C_7$. 
It remains to be studied the behavior of graphs belonging to $C_8$.

We conjecture that all graphs belonging to $C_8$ have toughness equal to 1.
However, the reasoning applied to the proofs of previous theorems does not apply to this class
and the structure of the clique-trees does not provide new insights.

With regard to hamiltonicity, some results are already known showing that $C_8$ contains both Hamiltonian and non-Hamiltonian graphs.
For instance, the graph presented by Nishizeki \cite{Ni80} (shown in Figure \ref{fig:c8}) belongs to $C_8$ and it is non-Hamiltonian.
Let us consider its clique-tree $T_N$.
It  has  three vertices, $q_1$, $q_2$ and $q_3$ with degree 4; 
the  paths joining them are  slim paths and $|Q_1\cap Q_2\cap Q_3| =  1$. 
Also in Figure \ref{fig:c8}, 
we present another graph  $G\in C_8$ on the same number of vertices and with
the clique-tree  isomorphic  to $T_N$;
however $Q_1\cap Q_2 \cap Q_3$ is empty and  $G$ is a Hamiltonian graph. 
This observation leads us to conjecture  that the non-empty intersection of the maximal cliques
 has a close relation to non-hamiltonicity.


\end{document}